\title{Topological proofs of results on large fields}
\author{Erik Walsberg\vspace{-6ex}}
\address{Department of Mathematics\\ University of California, Irvine}
\email{ewalsber@uci.edu}
\urladdr{https://www.math.uci.edu/\textasciitilde ewalsber}
\DeclareFontFamily{U}{BOONDOX-calo}{\skewchar\font=45 }
\DeclareFontShape{U}{BOONDOX-calo}{m}{n}{
  <-> s*[1.05] BOONDOX-r-calo}{}
\DeclareFontShape{U}{BOONDOX-calo}{b}{n}{
  <-> s*[1.05] BOONDOX-b-calo}{}
\DeclareMathAlphabet{\mathcalboondox}{U}{BOONDOX-calo}{m}{n}
\SetMathAlphabet{\mathcalboondox}{bold}{U}{BOONDOX-calo}{b}{n}
\DeclareMathAlphabet{\mathbcalboondox}{U}{BOONDOX-calo}{b}{n}
\DeclareMathOperator*{\forkindep}{\raise0.2ex\hbox{\ooalign{\hidewidth$\vert$\hidewidth\cr\raise-0.9ex\hbox{$\smile$}}}}
\newcommand{\Sa}[1]{\ensuremath{\mathscr{#1}}}
\newcommand{\trdg}{\operatorname{td}}
\newcommand{\pac}{\mathrm{PAC}}
\newcommand{\lins}{L^{\mathrm{ins}}}
\newcommand{\Spec}{\operatorname{Spec}}
\newcommand{\Chara}{\operatorname{Char}}
\newtheorem*{claim-star}{Claim}
\newtheorem{theorem}{Theorem}[section] % numbered like the section
\newtheorem{lemma}[theorem]{Lemma}
\newtheorem{prop-def}[theorem]{Proposition-Definition}
\newtheorem{fact}[theorem]{Fact}
\newtheorem{fact-eh}[theorem]{Fact(?)}
\newtheorem{question}[theorem]{Question}
\newtheorem{proposition}[theorem]{Proposition}
\newtheorem{proposition-eh}[theorem]{Proposition(?)}
\newtheorem*{theorem-star}{Theorem}
\newtheorem*{conjecture-star}{Conjecture}
\newtheorem*{lemma-star}{Lemma}
\newtheorem*{factA}{Fact A}
\newtheorem*{factB}{Fact B}
\newtheorem*{factC}{Fact C}
\newtheorem*{factCC}{Alternate form of Fact C}
\newtheorem*{propA}{Proposition A}
\newtheorem*{propB}{Proposition B}
\newtheorem*{propC}{Proposition C}
\theoremstyle{definition}
\theoremstyle{remark}
\newcommand{\Aa}{\mathbb{A}}
\newcommand{\Qq}{\mathbb{Q}}
\newcommand{\Rr}{\mathbb{R}}
\newcommand{\Zz}{\mathbb{Z}}
\newcommand{\meno}{\medskip \noindent}
\newenvironment{claimproof}[1][\proofname]
               {
                 \proof[#1]
                 
               }
               {
                 \endproof
               }
\begin{document}
\maketitle
\begin{abstract}
We use the recently introduced \'etale open topology to prove several facts on large fields.
We show that these facts lift to a very general topological setting.
\end{abstract}

Throughout $K,L$ are fields, $L$ is infinite, and $\Aa^m_K, \Aa^m_L$ is $m$-dimensional affine space over $K,L$, respectively.
A $K$-variety is a separated $K$-scheme of finite type, not assumed to be reduced.
If $K$ is a subfield of $L$ and $V$ is a $K$-variety then $V_L = V \times_{\Spec K} \Spec L$ is the base change of $V$, and if $f : V \to W$ is a morphism of $K$-varieties then $f_L : V_L \to W_L$ is the base change of $f$.
Given a $K$-variety $V$ we let $V(K)$ be the set of $K$-points of $V$, $K[V]$ be the coordinate ring of $V$, and $K(V)$ be the function field of $V$ when $V$ is integral.

\meno
$L$ is \textbf{large} if every smooth $L$-curve with an $L$-point has infinitely many $L$-points.
Finitely generated fields are not large.
Most other fields of particular interest are either large, or are function fields over large fields, or have unknown status.
Local fields, real closed fields, separably closed fields, fields which admit Henselian valuations, quotient fields of Henselian domains, pseudofinite fields, infinite algebraic extensions of finite fields, $\pac$ fields, $p$-closed fields, and fields that satisfy a local-global principle are all large.
Function fields are not large.
It is an open question whether the maximal abelian or maximal solvable extension of $\Qq$ is large.
%Arguably, most fields of special interest other than finite fields, number fields, and function fields are large.
See \cite{Pop-little} and \cite{open-problems-ample} for more background on large fields.

\meno
We will give topological proofs of Facts A,B, and C below.
%This topology is not discrete if and only if $L$ is large.
Fact A is \cite[Proposition 2.6]{Pop-little}.

\begin{factA}
Suppose that $L$ is large and $V$ is an irreducible $L$-variety with a smooth $L$-point.
Then $V(L)$ is Zariski dense in $V$.
\end{factA}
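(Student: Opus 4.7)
The plan is to reduce to the smooth case, use the étale-local structure of smooth varieties to produce a smooth curve through the hypothesised $L$-point that lies mostly in a given Zariski open, and then apply the definition of largeness directly to that curve.

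For the reduction: since smoothness is a Zariski open condition and $V$ is irreducible, the smooth locus $V^{\mathrm{sm}}\subseteq V$ is a non-empty irreducible Zariski open subvariety that is itself Zariski dense in $V$, and it contains the given smooth $L$-point. It therefore suffices to show Zariski density of $V^{\mathrm{sm}}(L)$ in $V^{\mathrm{sm}}$, so I may assume $V$ is smooth. Fix $p\in V(L)$, let $W\subseteq V$ be any non-empty Zariski open, write $Z=V\setminus W$, and set $d=\dim V$. The étale-local description of smoothness provides a Zariski open $U\subseteq V$ with $p\in U$ and an étale morphism $f\colon U\to \Aa^d_L$ satisfying $f(p)=0$.

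The key geometric step is choosing the right line in $\Aa^d_L$. The image $f(Z\cap U)\subseteq \Aa^d_L$ is a constructible set of dimension at most $\dim Z<d$, so its Zariski closure $Z':=\overline{f(Z\cap U)}$ is a proper Zariski closed subvariety of $\Aa^d_L$. Consequently, the set of $v\in \Aa^d_L$ for which the line $\ell_v:=\{tv:t\in \Aa^1_L\}$ lies inside $Z'$ is itself a proper Zariski closed subset of $\Aa^d_L$. Since $L$ is infinite, $L^d$ is Zariski dense in $\Aa^d_L$, so I can pick $v\in L^d\setminus\{0\}$ with $\ell_v\not\subseteq Z'$; set $\ell=\ell_v$.

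Now let $C:=f^{-1}(\ell)$. The morphism $C\to\ell$ is étale by base change and $\ell$ is smooth of dimension one, so $C$ is smooth of dimension one. Let $C_0$ be the irreducible component of $C$ containing $p$. Because $f|_{C_0}$ is étale, hence quasi-finite, and because $\ell\cap Z'$ is a proper Zariski closed subset of $\ell$, hence finite, the intersection $C_0\cap Z\subseteq (f|_{C_0})^{-1}(\ell\cap Z')$ is finite. Largeness applied to the smooth $L$-curve $C_0$ with its $L$-point $p$ gives that $C_0(L)$ is infinite, and therefore $(C_0\cap W)(L)=C_0(L)\setminus (C_0\cap Z)(L)$ is infinite; in particular $W(L)$ is non-empty. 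I expect the main subtlety to be exactly this choice of line: one must arrange that the cut-out curve through $p$ is not absorbed into $Z$, which forces the condition $\ell\not\subseteq \overline{f(Z\cap U)}$ and so draws on both $\dim Z<d$ (via the irreducibility of $V$) and the infinitude of $L$. Once the line is chosen, the étale pullback and largeness do the rest mechanically.
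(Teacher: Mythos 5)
Your proof is correct, but it is not the route the paper takes; it is essentially the classical argument from the definition of largeness, whereas the paper's whole point is to replace that with a topological one. You pull the well-chosen line back along the \'etale chart $f$ to obtain a smooth curve $C_0$ through $p$ that meets $Z$ in only finitely many points, and then invoke largeness on $C_0$ directly. The paper instead pushes forward: it notes that $f(U(L))$ is a nonempty \'etale open subset of $L^m$, reduces Fact A to Proposition A (a nonempty open set for \emph{any} non-discrete system of topologies over $L$ is Zariski dense in $\Aa^m_L$), and proves Proposition A by restricting the open set to a line through one of its points meeting the set's Zariski closure in finitely many points, which would yield a nonempty finite open subset of $L$ and contradict non-discreteness; the bridge back to largeness is the theorem that $L$ is large iff the \'etale open topology over $L$ is non-discrete. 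Both arguments hinge on the same generic-line lemma, but yours is self-contained modulo standard algebraic geometry, while the paper's buys a more general statement about arbitrary systems of topologies and feeds the topological framework used throughout the rest of the paper. Two small points you pass over silently, neither a real gap: the case $\dim V=0$, where there is no line to choose but also nothing to prove; and the fact that the irreducible components of the smooth one-dimensional scheme $C$ are open and closed (by regularity), so that $C_0$ genuinely is a smooth $L$-curve with the $L$-point $p$ to which the definition of largeness applies.
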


Facts B and C are due to Fehm.
Fact B is proven in \cite{Fehm-subfield}.
Note that Fehm uses ``ample" for ``large" (this is one of a surprisingly large number of names used in the literature.)

\begin{factB}
\label{fact:fehm}
Suppose that $L$ is large, $K$ is a proper subfield of $L$, and $V$ is a positive-dimensional irreducible $K$-variety with a smooth $K$-point.
Then $|V(L) \setminus V(K)| = |L|$.
\end{factB}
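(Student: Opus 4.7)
The plan is to use the étale open topology on $L$-points, the paper's central tool. I will rely on two properties of this topology, both of which depend on $L$ being large. \emph{Property (P1):} for any smooth positive-dimensional $L$-variety $W$, every nonempty étale open subset of $W(L)$ has cardinality $|L|$. \emph{Property (P2):} if $K \subsetneq L$, then $K^n$ has empty interior in $L^n = \Aa^n_L(L)$ with respect to the étale open topology.

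Since $V$ has a smooth $K$-point $p$ and $d := \dim V \geq 1$, I may pass to a Zariski open neighborhood $V'$ of $p$ in $V$ admitting an étale $K$-morphism $f : V' \to \Aa^d_K$. Base changing, $f_L : V'_L \to \Aa^d_L$ is an étale morphism of smooth $L$-varieties with $V'_L$ of dimension $d \geq 1$.

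A defining feature of the étale open topology is that étale morphisms induce open maps on $L$-points. Hence $f_L(V'_L(L))$ is a nonempty étale open subset of $L^d$ (it contains $f(p)$). By (P2), $K^d$ has empty interior in $L^d$, so $f_L(V'_L(L))$ contains a nonempty étale open $U \subseteq L^d$ with $U \cap K^d = \emptyset$. Set $W := V'_L(L) \cap f_L^{-1}(U)$; then $W$ is a nonempty étale open in $V'_L(L)$, so by (P1) satisfies $|W| = |L|$. Since $f$ is defined over $K$ we have $f(V(K)) \subseteq K^d$, hence $W \cap V(K) = \emptyset$. Therefore $W \subseteq V(L) \setminus V(K)$, giving $|V(L) \setminus V(K)| \geq |L|$; the reverse inequality is immediate from $V(L) \subseteq L^m$ for some $m$.

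The main obstacle is (P2): that a proper subfield of a large field $L$ is nowhere dense in $L$ in the étale open topology. I expect this to proceed by contradiction—if $K^n$ had nonempty interior, then by translation invariance $K^n$ would itself be étale open, and combining this with the description of étale opens as unions of images of étale $L$-morphisms on $L$-points should force $K = L$, using the largeness of $L$. Property (P1) should be easier and reduce, via an étale-local chart, to the assertion that $|W(L)| = |L|$ for smooth positive-dimensional $L$-varieties $W$ with an $L$-point—a consequence of Fact A together with a cardinality count along an étale map to affine space.
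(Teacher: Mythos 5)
Your overall architecture (étale chart $f : V' \to \Aa^d_K$, base change, openness of $f_L$ on $L$-points, then a statement about étale open subsets of $L^d$ versus $K^d$) matches the paper's reduction exactly. But there is a genuine gap at the heart of your argument: the step from (P2) to the existence of a nonempty étale open $U \subseteq f_L(V'_L(L))$ with $U \cap K^d = \emptyset$. Having empty interior does not give you an open set disjoint from $K^d$; for that you would need $K^d$ to be nowhere dense (equivalently, not dense in any nonempty open set), and this is false in general. Take $K = \Qq$ and $L = \Rr$: the étale open topology on $\Rr$ is the order topology, and $\Qq^d$ is dense in $\Rr^d$, so \emph{every} nonempty étale open subset of $\Rr^d$ meets $\Qq^d$. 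Yet Fact B must still produce $|\Rr|$ many points of $V(\Rr)\setminus V(\Qq)$. So your strategy of ``dodging'' $K^d$ by an open set cannot work; no open set dodges a dense proper subfield.

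This is precisely why the paper's Proposition B is proved by a counting argument rather than a separation argument. The paper projects to $m = 1$ (coordinate projections are $\Sa T$-open by Fact~\ref{fact:proj}), shows that any nonempty $\Sa T$-open $O \ni 0$ satisfies $OO^{-1} = L$ (else $O \cap aO = \{0\}$ would be open, forcing discreteness), and then invokes Fehm's combinatorial lemma: if $X \subseteq L$ satisfies $XX^{-1} = L$ and $K \subsetneq L$, then $|X \setminus K| = |L|$, proved by covering the $K$-vector space $L$ by one-dimensional subspaces indexed by $X \setminus K$ and applying Fact~\ref{fact:linear-algebra}. Your (P1) is fine (it is Proposition~\ref{prop:very-large-gen}), but to repair the proof you must replace (P2) with a quantitative statement of this kind: not that some open set misses $K^d$, but that every nonempty open set contains $|L|$ many points outside $K^d$.
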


% \meno
% We sketch a proof in the case when $L$ is a local field.
% Let $m = \dim V$, $\Aa^m$ be $m$-dimensional affine space over $K$, and fix a smooth $p \in V(K)$.
% As $V$ is irreducible there is an open subvariety $U$ of $V$ containing $p$ and an \'etale morphism $U \to \Aa^m$.
% Let $O$ be the image of the induced map $U(L) \to \Aa^m(L) = L^m$.
% Then $O$ is open as $U(L) \to L^m$ is a local homeomorphism.
% It is easy to see that $|O \setminus K^m| = |L|$ and this yields $|V(L) \setminus V(K)| = |L|$.
% In this argument we use the topology to reduce a statement about varieties to an obvious statement about open subsets of $L^m$.
% With the \'etale open topology we can perform the same reduction to a now less trivial result about \'etale open subsets of $L^m$.
% This result then follows from a general result on non-discrete affine invariant topologies on fields.

Fact B is a strengthening of the fact that if $L$ is large and $V$ is a positive-dimensional irreducible $L$-variety with a smooth $L$-point then $|V(L)| = |L|$.
This was previously proven by Pop, see \cite[Proposition 3.3]{harbater}.
We give a separate proof of this fact in Section~\ref{section:very-large}.
Secondly, Fact B and the fact that an algebraic extension of a large field is large yields the following: if $K$ is large, $V$ is a positive dimensional irreducible $K$-variety with a smooth $K$-point, and $L/K$ is algebraic then $|V(L) \setminus V(K)| =  |L|$.
(We also give a topological proof of the fact that large fields are closed under algebraic extensions in Section~\ref{section:fin-ext}.)

\meno
Fact C is from \cite{fehm-embedding}.
We let $\trdg(E/F)$ be the transendence degree of a field extension $E/F$.

\begin{factC}
\label{fact:embedding}
Suppose $K$ is a subfield of $L$, $L$ is large, and $V$ is a smooth geometrically integral $K$-variety.
Then the following are equivalent:
\begin{enumerate}
\item $\trdg(L/K) \ge \dim V$ and $V(L) \ne \emptyset$,
\item there is a $K$-algebra embedding $K(V) \to L$.
\end{enumerate}
\end{factC}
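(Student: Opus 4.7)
The easy direction (2) $\Rightarrow$ (1) is immediate: a $K$-algebra embedding $K(V) \hookrightarrow L$ gives $\trdg(L/K) \ge \trdg(K(V)/K) = \dim V$, and composing $\Spec L \to \Spec K(V) \hookrightarrow V$ produces an $L$-point of $V$.

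For (1) $\Rightarrow$ (2), the plan is to produce an $L$-point of $V$ whose image in $V$ (as a $K$-scheme) is the generic point; such a point factors as $\Spec L \to \Spec K(V) \to V$ and so induces the desired $K$-embedding. First, replace $V$ by an affine open intersecting $V(L)$, which is harmless by Fact A applied to the smooth geometrically integral $L$-variety $V_L$. Using smoothness and geometric integrality, take a separating transcendence basis $x_1, \dots, x_n$ of $K(V)/K$ lying in $K[V]$; after a further shrinking the induced $K$-morphism $f \colon V \to \Aa^n_K$ is \'etale, and Fact A still ensures $V(L) \ne \emptyset$. Then $f_L(V(L)) \subseteq L^n$ is a non-empty \'etale-open subset.

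The argument then reduces to proving that every non-empty \'etale-open subset of $L^n$ contains a tuple $(a_1, \dots, a_n)$ algebraically independent over $K$. Granting this, pick such a tuple in $f_L(V(L))$ and lift to $p \in V(L)$. The corresponding $\phi \colon K[V] \to L$ sends $x_i \mapsto a_i$, so $\ker \phi \cap K[x_1, \dots, x_n] = (0)$. Since $K[x_1, \dots, x_n] \hookrightarrow K[V]$ is \'etale and $V$ is integral, the generic fiber of $f$ is the single point $\Spec K(V)$, and hence the only prime of $K[V]$ contracting to $(0)$ is $(0)$ itself. Therefore $\phi$ is injective and extends to $K(V) \hookrightarrow L$.

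The hard step is the density statement, where both hypotheses of (1) enter crucially: $\trdg(L/K) \ge n$ supplies algebraically independent tuples, while largeness of $L$ forces non-empty \'etale-opens to be $L$-Zariski dense in $\Aa^n_L$ (via Fact A). A one-step version is easy --- a non-empty \'etale-open of $L^n$ cannot lie in any single $Z(L)$ for $Z \subsetneq \Aa^n_K$, since $Z_L$ is a proper $L$-Zariski closed subset of $\Aa^n_L$ --- but the delicate point is ruling out containment in the whole (possibly uncountable) family of such $Z$. My plan is to spread a basic \'etale-open $g(Y(L))$ out over a finitely generated intermediate extension $K \subseteq K_0 \subseteq L$, so that $g$ descends to an \'etale $K_0$-morphism $Y_0 \to \Aa^n_{K_0}$, and then iteratively apply Fact B to $Y_0$ (or to suitable auxiliary subvarieties) to produce new $L$-points of $Y_0$ whose images escape prescribed proper $K$-subvarieties while enlarging the transcendence degree of the coordinate field at each step. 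The main obstacle I foresee is arranging that each application of Fact B strictly increases transcendence degree over $K$, rather than only algebraicity, so that the induction terminates at an algebraically independent tuple after at most $n$ steps.
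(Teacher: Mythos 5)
Your reduction is exactly the one the paper uses: the direction $(2)\Rightarrow(1)$ is routine, and you correctly identify the heart of $(1)\Rightarrow(2)$ as the statement that every nonempty \'etale open subset $O$ of $L^n$ contains a tuple algebraically independent over $K$ (this is Proposition C). The problem is that you do not prove this statement: you only outline a plan (``spread out over a finitely generated subfield, iterate Fact B''), and you yourself flag the obstacle that makes the plan incomplete. Fact B, or Proposition B, only produces points of $O$ outside $K^n$, i.e.\ points with some coordinate not in $K$, and such a coordinate can perfectly well be \emph{algebraic} over $K$; iterating does not visibly increase transcendence degree, so the induction you describe has no reason to terminate at an algebraically independent tuple. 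This is a genuine gap, not a detail to be filled in later.

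The fix is short. For $n=1$, let $K'$ be the relative algebraic closure of $K$ in $L$. Since $\trdg(L/K)\ge 1$, $K'$ is a \emph{proper} subfield of $L$, so Proposition B applied to $K'$ (rather than to $K$) gives $a\in O\setminus K'$, and such an $a$ is transcendental over $K$ by the definition of $K'$. For $n\ge 2$ one inducts on $n$: the image of $O$ under the projection $\pi$ away from the first coordinate is open by Fact~\ref{fact:proj}, so by induction it contains $b$ with $\trdg(K(b)/K)=n-1$; the fiber $O_b=\{c\in L:(b,c)\in O\}$ is open in $L$, being the preimage of $O$ under the continuous section $x\mapsto(b,x)$; and since $\trdg(L/K(b))\ge 1$, the one-variable case over $K(b)$ yields $c\in O_b$ transcendental over $K(b)$. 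No spreading out over finitely generated subfields is needed, and the argument works for an arbitrary non-discrete system of topologies. Your remaining steps --- shrinking $V$ using Fact A, lifting the algebraically independent tuple through the \'etale map, and deducing injectivity of $K[V]\to L$ from the fact that the generic fiber of $f$ is $\Spec K(V)$ --- are correct.
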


Another proof of Fact C is given in \cite[Proposition 1.1]{large-diff-galois}, they reduce to the one-dimensional case which then follows directly by Fact B.
The implication $(2) \Rightarrow (1)$ is routine and does not require largeness.
We describe a geometric statement equivalent to $(1) \Rightarrow (2)$.
Suppose that $p \in V(L)$ and $p \notin W(L)$ for any proper closed subvariety $W$ of $V$.
Let $U$ be an affine open subvariety of $V$, so $p \in U(L)$.
Note that $K(U) = K(V)$ and $K(V)$ is the fraction field of $K[U]$.
Now $p$ gives a morphism $\Spec L \to U$, which is dual to an $K$-algebra morphism $K[U] \to L$.
Note that $K[U] \to L$ is injective as $p \notin W(L)$ for any proper closed subvariety $W$ of $V$.
So $K[U] \to L$ extends to a $K$-algebra morphism $K(V) = K(U)  \to L$.
So we prove the following.

\begin{factCC}
Suppose that $L$ is large, $K$ is a subfield of $L$ with $\trdg(L/K) \ge m$, and $V$ is a smooth geometrically integral $m$-dimensional $K$-variety with $V(L) \ne \emptyset$.
Then there is $p \in V(L)$ such that $p \notin W(L)$ for any proper closed subvariety $W$ of $V$.
\end{factCC}

We now discuss our proof technique.
Each fact says that $V(L)$ is large in some sense.
Fix a smooth $p \in V(L)$.
There is an open subvariety $U$ of $V$ containing $p$ and an \'etale morphism $f : U \to \Aa^m_L$, and $f(U(L))$ is a nonempty \'etale open subset of $L^m$.
This allows us to reduce each of the facts above to a statement saying non-empty \'etale open subsets of $L^m$ are large in some sense.
In each case the statement holds in a very broad setting which we now describe.

\meno
A \textbf{system of topologies} $\Sa T$ over $L$ is a choice of topology on $V(L)$ for each $L$-variety $V$ such that the following holds for any morphism $f : V \to W$ of $L$-varieties:
\begin{enumerate}
\item the induced map $V(L) \to W(L)$ is continuous,
\item if $f$ is an open immersion then $V(L) \to W(L)$ is a topological open embedding, and
\item if $f$ is a closed immersion then $V(L) \to W(L)$ is topological closed embedding.
\end{enumerate}

If $\tau$ is a Hausdorff field topology on $L$ then we produce a system of topologies by equipping each $V(L)$ with the usual $\tau$-topology, the other familiar example of a system is the Zariski topology.
The \'etale open topology is a system of topologies, which may or may not be induced by a Hausdorff field topology on $L$.
It is easy to see that the $\Sa T$-topology on $L = \Aa^1_L(L)$ is discrete if and only if the $\Sa T$-topology on $V(L)$ is discrete for every $L$-variety $V$, and we say that $\Sa T$ is \textbf{discrete} if these conditions hold.
We show in \cite{firstpaper} that $L$ is large if and only if the \'etale open topology over $L$ is not discrete.

\meno
Fact A,B,C follows from Proposition A,B,C, respectively.

\begin{propA}
Suppose that $\Sa T$ is a non-discrete system of topologies over $L$ and $O$ is a nonempty $\Sa T$-open subset of $L^m$.
Then $O$ is Zariski dense in $\Aa^m_L$.
\end{propA}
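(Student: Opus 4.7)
The plan is to induct on $m$.

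For the base case $m=1$: the key point is that the $\Sa T$-topology on $L$ is $T_1$. Indeed, for any $a \in L$ the morphism $\Spec L \to \Aa^1_L$ picking out $a$ is a closed immersion, so axiom (3) forces $\{a\}$ to be $\Sa T$-closed in $L$. Translation by any $a \in L$ is an automorphism of $\Aa^1_L$, so applying axiom (1) to it and its inverse shows it induces a self-homeomorphism of $L$. If some nonempty $\Sa T$-open $U \subseteq L$ were finite, we could isolate a singleton inside $U$ (using $T_1$) and then translate to see that every singleton is open, contradicting the nondiscreteness of $\Sa T$. Hence every nonempty $\Sa T$-open subset of $L$ is infinite, and therefore Zariski dense in $\Aa^1_L$ (whose proper closed subvarieties have only finitely many $L$-points).

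For the inductive step: assuming the statement for $m-1$, let $O \subseteq L^m$ be nonempty $\Sa T$-open. Suppose toward a contradiction that a nonzero $f \in L[x_1, \ldots, x_m]$ vanishes on $O$, and write $f = \sum_i c_i(x_2, \ldots, x_m)\,x_1^i$ with $c_i \in L[x_2,\ldots,x_m]$ not all zero. Fix any $(a_1, \ldots, a_m) \in O$. The map $(y_2, \ldots, y_m) \mapsto (a_1, y_2, \ldots, y_m)$ is a closed immersion $\Aa^{m-1}_L \to \Aa^m_L$, so by axiom (3) the preimage of $O$ in $L^{m-1}$ is a nonempty $\Sa T$-open set, hence Zariski dense in $\Aa^{m-1}_L$ by the induction hypothesis. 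It follows that $f(a_1, x_2, \ldots, x_m)$ is the zero polynomial in $L[x_2, \ldots, x_m]$, and this conclusion holds for every $a_1 \in \pi_1(O)$, where $\pi_1 : L^m \to L$ is the first-coordinate projection. Applying the base case to the preimage of $O$ under the closed immersion $t \mapsto (t, a_2, \ldots, a_m)$ shows that $\pi_1(O)$ is infinite. Equating monomial coefficients in $\sum_i c_i(x_2, \ldots, x_m)\,a_1^i = 0$ across infinitely many $a_1$ then produces single-variable polynomials in $x_1$ with infinitely many roots, so every $c_i = 0$, contradicting $f \neq 0$.

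The main obstacle is the base case: one must combine the $T_1$ property (from the closed-immersion axiom applied to $L$-points) with translation-homogeneity (automorphisms of $\Aa^1_L$ are $\Sa T$-homeomorphisms of $L$) to upgrade the bare nondiscreteness assumption into the statement that every nonempty open subset of $L$ is infinite. Once this is established, the inductive step is a routine slicing argument: the closed-embedding axiom lets us restrict $O$ to coordinate hyperplanes and to coordinate lines, reducing Zariski density in $\Aa^m_L$ to the case of $\Aa^{m-1}_L$ and $\Aa^1_L$.
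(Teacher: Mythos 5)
Your proof is correct, but it is organized quite differently from the paper's. The paper argues in one step: letting $W$ be the Zariski closure of $O$, it picks a point $p \in O$ and a line through $p$ meeting $W$ in only finitely many points (such a line exists because $L$ is infinite and $\dim W < m$); pulling $O$ back along this line yields a nonempty \emph{finite} $\Sa T$-open subset of $L$, contradicting non-discreteness. You instead induct on $m$, slicing $O$ by coordinate hyperplanes and coordinate lines (both closed immersions, so the slices are open), and finish with a coefficient-comparison argument showing that a polynomial vanishing on $O$ must be zero. Both routes bottom out in the same one-dimensional fact, which you establish cleanly: a nonempty finite $\Sa T$-open subset of $L$ forces discreteness, via the $T_1$ property (singletons are closed by the closed-immersion axiom) together with translation-homogeneity; the paper records the same fact as a consequence of affine invariance in its background section. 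What the paper's generic-line argument buys is brevity and the avoidance of any induction; what your argument buys is that it never needs to produce a "typical" line avoiding $W$ outside one point --- a step the paper asserts without proof and which itself requires a small argument using the infinitude of $L$ --- since coordinate slices suffice. Your inductive step is slightly longer but every step is elementary and fully justified.
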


\begin{propB}
Suppose that $\Sa T$ is a non-discrete system of topologies over $L$, $K$ is a proper subfield of $L$, and $O$ is a nonempty $\Sa T$-open subset of $L^m$.
Then $|O \setminus K^m| = |L|$.
\end{propB}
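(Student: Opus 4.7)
The plan is to first establish a cardinality lemma (every nonempty $\Sa T$-open in $L^n$ has cardinality $|L|$), then to reduce the case $m \geq 2$ to the one-dimensional statement via a line, and finally to dispatch the one-dimensional case by a scaling argument. The key tools throughout are that translations and nonzero scalings of $L^n$ are homeomorphisms (since they come from automorphisms of $\Aa^n_L$) and that $\{0\}$, hence every singleton, is closed by applying the closed-immersion axiom to $\{0\} \hookrightarrow \Aa^1_L$.

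For the cardinality lemma I would reduce to $n = 1$ by intersecting $O$ with the image of a closed immersion $\Aa^1_L \hookrightarrow \Aa^n_L$ through a point of $O$. For $n = 1$: a finite nonempty open, translated to contain $0$ and taken of minimal cardinality, would force $\{0\}$ to be open via intersection with its scaled copies, contradicting non-discreteness, so every nonempty open is infinite. Next, the subfield $F$ of $L$ generated by a nonempty open $O \subseteq L$ must equal $L$: otherwise $O \subseteq F$, and for $a \in L \setminus F$ the open set $aO \cap O$ lies in $aF \cap F = \{0\}$ (since $a \notin F$ forces $aF \cap F = \{0\}$), again forcing $\{0\}$ open. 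Hence $|L| \leq |O|^{<\omega} = |O|$ for $|O|$ infinite, and so $|O| = |L|$.

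For $m \geq 2$: if $O \subseteq K^m$, scaling by $a \in L \setminus K$ yields $aO \cap O \subseteq aK^m \cap K^m = \{0\}$ (each coordinate of a common point lies in $aK \cap K = \{0\}$), contradicting non-discreteness. Otherwise pick $p \in O$ with some coordinate $p_j \notin K$, fix $i \neq j$, and take $\ell = \{p + te_i : t \in L\}$; every point of $\ell$ has $j$-th coordinate $p_j \notin K$, so $\ell \cap K^m = \emptyset$. Then $O \cap \ell$ is a nonempty $\Sa T$-open in $\ell \cong L$, contained in $O \setminus K^m$ and of cardinality $|L|$ by the cardinality lemma.

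The main obstacle is the $m = 1$ case. If $O \cap K = \emptyset$ the conclusion is immediate from the cardinality lemma; otherwise, after translating by an element of $O \cap K$, assume $0 \in O$ and pick $a \in L \setminus K$. Since $a^{-1} K \cap K = \{0\}$, one has $a^{-1}K \setminus \{0\} \subseteq L \setminus K$, so $|O \cap a^{-1}K| \leq |O \setminus K| + 1$, which by multiplying through by $a$ gives $|aO \cap K| = |O \cap a^{-1}K| \leq |O \setminus K| + 1$. Now $aO \cap O$ is a nonempty $\Sa T$-open (it contains $0$), so has cardinality $|L|$ by the cardinality lemma, while $(aO \cap O) \cap K \subseteq aO \cap K$ has cardinality at most $|O \setminus K| + 1$. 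If $|O \setminus K| < |L|$, cardinal arithmetic forces $|(aO \cap O) \setminus K| = |L|$; but $(aO \cap O) \setminus K \subseteq O \setminus K$, so $|O \setminus K| \geq |L|$, a contradiction. Hence $|O \setminus K| = |L|$.
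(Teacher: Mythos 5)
Your proof is correct, but it follows a genuinely different route from the paper's. The paper reduces every $m$ to the case $m=1$ via a coordinate projection (using Fact~\ref{fact:proj}), and then settles $m=1$ by showing $(O-b)(O-b)^{-1}=L$ and invoking Lemma~\ref{lem:large-difference}, which in turn rests on Fact~\ref{fact:linear-algebra} about covering a $K$-vector space by one-dimensional subspaces --- this is Fehm's original counting argument. You instead (i) handle $m\ge 2$ by first ruling out $O\subseteq K^m$ with a scaling, then running a line through a point of $O\setminus K^m$ parallel to a coordinate axis other than the offending one, so that the whole line misses $K^m$ and the pullback of $O$ to it already has cardinality $|L|$; and (ii) for $m=1$ you replace the vector-space lemma with a direct cardinality comparison: $|aO\cap O|=|L|$ while $|(aO\cap O)\cap K|\le|aO\cap K|=|O\cap a^{-1}K|\le|O\setminus K|+1$, so $|O\setminus K|<|L|$ is impossible. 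Your $m=1$ argument is more elementary and self-contained (it needs only the cardinality lemma plus affine invariance), whereas the paper's detour through $XX^{-1}=L$ yields Lemma~\ref{lem:large-difference} as a reusable statement about arbitrary sets with $XX^{-1}=L$, not just $\Sa T$-open ones. Your cardinality lemma is also proved slightly differently (subfield generated by $O$ equals $L$, versus the paper's $OO^{-1}=L$), but both hinge on the same $aO\cap O=\{0\}$ trick.

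One small point of hygiene: in the subfield-generation step and in the case $O\subseteq K^m$, you need $0\in O$ for $aO\cap O$ to be nonempty (otherwise $aO\cap O=\emptyset$ yields no contradiction). This is fixed by first translating by an element of $O$ (respectively of $O\subseteq K^m$), which is clearly what you intend given your opening remarks, but it should be said explicitly since the claim ``the subfield generated by a nonempty open equals $L$'' is only being proved after that normalization.
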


Note that if $a = (a_1,\ldots,a_m) \in L^m$ then $\trdg(K(a_1,\ldots,a_m)/K)$ is the minimum dimension of a closed subvariety $W$ of $\Aa^m_K$ such that $a \in W(L)$.

\begin{propC}
Suppose that $\Sa T$ is a non-discrete system of topologies over $L$, $K$ is a subfield of $L$ with $\trdg(L/K) \ge m$, and $O$ is a nonempty $\Sa T$-open subset of $L^m$.
Then there is $(a_1,\ldots,a_m) \in O$ such that $\trdg(K(a_1,\ldots,a_m)/K) = m$.
Equivalently there is $a \in O$ such that $a \notin W(L)$ for any proper closed subvariety $W$ of $\Aa^m_K$.
\end{propC}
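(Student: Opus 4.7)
The plan is to prove Proposition C by induction on $m$, using Proposition B to peel off one transcendental coordinate at a time: I will locate a ``vertical slice'' of $O$ whose last coordinate is transcendental over $K$, then apply the inductive hypothesis to the $(m-1)$-dimensional slice over the enlarged field $K(a)$. For the base case $m = 1$, the relative algebraic closure $F := K^{\mathrm{alg}} \cap L$ is a proper subfield of $L$ since $\trdg(L/K) \ge 1$; Proposition B gives $|O \setminus F| = |L|$, so any $a \in O \setminus F$ is transcendental over $K$ and $\trdg(K(a)/K) = 1$.

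For the inductive step with $m \ge 2$, set $F := K^{\mathrm{alg}} \cap L \subsetneq L$, and for each $a \in L$ let $O^{(a)} := \{b \in L^{m-1} : (b, a) \in O\}$. Each $O^{(a)}$ is $\Sa T$-open in $L^{m-1}$, because the inclusion $\Aa^{m-1}_L \hookrightarrow \Aa^m_L$, $b \mapsto (b, a)$, is a closed immersion and hence a topological embedding by axiom (3) of a system of topologies. The key claim to establish is that there exists $a \in L \setminus F$ with $O^{(a)} \ne \emptyset$. Once this is granted, $O^{(a)}$ is a nonempty $\Sa T$-open subset of $L^{m-1}$, $a$ is transcendental over $K$ so $\trdg(L/K(a)) \ge m - 1$, and the inductive hypothesis applied to $O^{(a)}$ over the base field $K(a)$ produces $(a_1, \ldots, a_{m-1}) \in O^{(a)}$ with $\trdg(K(a)(a_1, \ldots, a_{m-1})/K(a)) = m - 1$; then $(a_1, \ldots, a_{m-1}, a) \in O$ satisfies $\trdg(K(a_1, \ldots, a_{m-1}, a)/K) = m$ by additivity of transcendence degree.

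To prove the key claim, suppose otherwise, so $O \subseteq L^{m-1} \times F$. Pick any $(c, d) \in O$ and consider the vertical slice $\tilde{O}_c := \{a \in L : (c, a) \in O\}$, which is $\Sa T$-open in $L$ via the closed immersion $\Aa^1_L \hookrightarrow \Aa^m_L$, $a \mapsto (c, a)$, and nonempty as it contains $d$. By assumption $\tilde{O}_c \subseteq F$, yet Proposition B applied to the nonempty $\Sa T$-open set $\tilde{O}_c \subseteq L$ with proper subfield $F \subsetneq L$ forces $|\tilde{O}_c \setminus F| = |L| > 0$, a contradiction. The crucial step in the whole argument is precisely this production of a transcendental value for one coordinate: Proposition A alone only excludes a single proper closed subvariety at a time, whereas Proposition B rules out a nonempty $\Sa T$-open subset of $L$ being trapped inside any proper subfield, which is exactly what is needed to complete the induction.
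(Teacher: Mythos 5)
Your proof is correct, and it is a mirror image of the paper's argument rather than an exact copy. Both proofs induct on $m$ and both reduce the one-dimensional step to Proposition B applied to the relative algebraic closure of the base field in $L$ (a proper subfield because there is still positive transcendence degree to spend). The paper, however, first applies Fact~\ref{fact:proj} to project $O$ onto $m-1$ coordinates, invokes the inductive hypothesis there to get a tuple $b$ of transcendence degree $m-1$, and only then slices to get a nonempty $\Sa T$-open fiber $O_b \subseteq L$, finishing with Proposition B over $K(b)$. You reverse the order: you first extract a single coordinate value $a$ transcendental over $K$ (via your contradiction argument that $O$ cannot be trapped in $L^{m-1} \times F$, again by Proposition B applied to a one-dimensional slice), and then apply the inductive hypothesis to the $(m-1)$-dimensional slice $O^{(a)}$ over the enlarged field $K(a)$. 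A small payoff of your ordering is that you never need Fact~\ref{fact:proj} (openness of coordinate projections, which is a nontrivial property of systems of topologies): you only use preimages under closed immersions, which are open by continuity alone. The cost is the slightly more roundabout contradiction argument for your key claim, which the paper's projection-first organization avoids. Both arguments correctly quantify the inductive statement over all subfields so that the base field may be enlarged to $K(a)$ (resp.\ $K(b)$) mid-induction, and the final transcendence-degree count by additivity is right.
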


In Sections~\ref{section:finite} and \ref{section:diophantine} we use unpublished work of JTWY to give topological proofs of two more facts.
In this case our proof is specific to the \'etale open topology and does not yield a more general result on systems of topologies.

\subsection{Acknowledgements}
The basic facts about the \'etale open topology that we use were developed jointly with Will Johnson, Chieu-Minh Tran, and Vincent Ye.
The ideas in the proof of Fact B come from work of Arno Fehm.
Arno Fehm also read an earlier version of this note, made helpful suggestions, and pointed out mistakes.

\section{Background}
\label{section:background}
It is worth noting that any system of topologies refines the Zariski topology, i.e. if $\Sa T$ is a system of topologies over $L$ and $V$ is an $L$-variety then the $\Sa T$-topology on $V(L)$ refines the Zariski topology.
Fact~\ref{fact:proj} is proven in \cite{firstpaper}.
The $\Sa T$-topology and the product of the $\Sa T$-topologies on $(V \times W)(L) = V(L) \times W(L)$ may not agree.

\begin{fact}
\label{fact:proj}
Suppose that $\Sa T$ is a system of topologies over $L$ and $V,W$ are $L$-varieties.
Then the projection $V(L) \times W(L) \to V(L)$ is a $\Sa T$-open map.
\end{fact}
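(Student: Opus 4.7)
The plan is to prove openness pointwise by slicing with a closed immersion. Let $U \subseteq (V \times W)(L) = V(L) \times W(L)$ be $\Sa T$-open and let $\pi : V \times W \to V$ be the projection; I want to show that for every $(v_0, w_0) \in U$, the point $v_0$ has a $\Sa T$-open neighborhood in $V(L)$ contained in $\pi(U)$.

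The key observation is that for a fixed $w_0 \in W(L)$, the morphism of $L$-varieties
\[
\iota_{w_0} : V \to V \times W, \qquad v \mapsto (v, w_0),
\]
is a closed immersion: it factors as the isomorphism $V \xrightarrow{\sim} V \times \{w_0\}$ followed by the closed immersion $V \times \{w_0\} \hookrightarrow V \times W$ (the singleton $\{w_0\}$ is a closed subvariety of $W$ since $w_0$ is an $L$-rational, hence closed, point). By axiom (1) of a system of topologies (in fact by axiom (3), though continuity is all that is needed), $\iota_{w_0} : V(L) \to (V \times W)(L)$ is $\Sa T$-continuous.

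Therefore $\iota_{w_0}^{-1}(U)$ is $\Sa T$-open in $V(L)$. It contains $v_0$ because $\iota_{w_0}(v_0) = (v_0, w_0) \in U$, and it is contained in $\pi(U)$ because any $v \in \iota_{w_0}^{-1}(U)$ satisfies $(v, w_0) \in U$ and hence $v = \pi(v, w_0) \in \pi(U)$. Thus $\pi(U)$ contains a $\Sa T$-open neighborhood of each of its points and is itself $\Sa T$-open.

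There is no real obstacle here, since the argument is forced by the axioms once one has the right map in mind. The only thing to verify carefully is that $\iota_{w_0}$ is genuinely a morphism of $L$-varieties whose image is closed, which boils down to the fact that $L$-points of a separated $L$-variety are closed. Note that this argument does not produce a basis of product-form opens for the $\Sa T$-topology on $(V \times W)(L)$, consistent with the warning in the paragraph preceding the statement.
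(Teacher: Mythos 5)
Your proof is correct: the slicing map $\iota_{w_0}$ is indeed a morphism of $L$-varieties (in fact a closed immersion, though as you note only continuity via axiom (1) is needed), and $\iota_{w_0}^{-1}(U)$ is exactly the open neighborhood of $v_0$ inside $\pi(U)$ that one wants. The paper itself only cites \cite{firstpaper} for this fact rather than proving it, but the argument given there is this same slicing argument, so your proposal matches the intended proof.
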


We will also make frequent use the obvious fact that the $\Sa T$-topology on $L$ is affine invariant, i.e. the map $L \to L$, $x \mapsto ax + b$ is a homeomorphism for all $a \in L^\times, b \in L$.
In particular this implies that $\Sa T$ is discrete if and only if there is a non-empty finite $\Sa T$-open subset of $L$.

\meno
Let $V$ be a $L$-variety.
An \textbf{\'etale image} in $V(L)$ is a set of the form $h(W(L))$ for an \'etale morphism $h : W \to V$ of $L$-varieties.
We emphasize that Fact~\ref{fact:basic-system} follows from standard facts on \'etale morphisms.
Fact~\ref{fact:basic-system} is also proven in \cite{firstpaper}.

\begin{fact}
\label{fact:basic-system}
Given an $L$-variety $V$, the collection of \'etale images in $V(L)$ is a basis for a topology.
The collection of such topologies forms a system of topologies over $L$.
If $f : V \to W$ is an \'etale morphism of $L$-varieties and $O$ is an \'etale open subset of $V(L)$ then $f(O)$ is an \'etale open subset of $W(L)$.
\end{fact}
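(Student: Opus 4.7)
The plan is to verify each clause in turn, invoking only standard closure properties of étale morphisms: they are closed under composition and base change, they include open immersions, and every étale morphism is Zariski-locally of \emph{standard étale form} $\Spec(B[t]/(p))_{p'} \to \Spec B$ with $p$ monic.

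First I would show the étale images form a basis. The identity $V \to V$ is étale, so $V(L)$ is itself an étale image. Given étale morphisms $h_i : U_i \to V$ for $i = 1,2$, the fiber product $U_1 \times_V U_2$ has étale projections to each $U_i$ (base change), so composition with any $h_i$ yields an étale morphism to $V$; the universal property applied to $\Spec L$ identifies the image of $(U_1 \times_V U_2)(L)$ in $V(L)$ with $h_1(U_1(L)) \cap h_2(U_2(L))$. Hence étale images are actually closed under pairwise intersection, which is stronger than the basis axiom.

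Next I would verify the three axioms of a system of topologies. For continuity of $f : V \to W$, the base change $V \times_W U \to V$ of an étale $h : U \to W$ is étale and its image on $L$-points equals $f^{-1}(h(U(L)))$ by the universal property, so preimages of basic étale opens are étale open. For an open immersion $f : V \to W$: $f$ itself is étale so $f(V(L))$ is an étale image, and for any étale $h : U \to V$ the composition $f \circ h$ is étale, so $f$ also sends basic étale opens to étale opens, making it an open topological embedding. For a closed immersion $f : V \to W$: the complement $W \setminus V$ is an open subscheme, covered by principal opens $D(g)$ whose $L$-points are étale images in $W(L)$ via the open immersions $D(g) \hookrightarrow W$, so $f(V(L))$ is closed in $W(L)$.

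The one step requiring real content is that $f$ is a topological embedding in the closed-immersion case, i.e.\ that every étale open of $V(L)$ arises as the restriction of an étale open of $W(L)$. Reducing to the affine case $W = \Spec A$, $V = \Spec(A/I)$ and working Zariski-locally on the source, it suffices to handle an étale $h : U \to V$ of standard form $U = \Spec((A/I)[t]/(\bar p))_{\bar p'}$. Lifting $\bar p$ to a monic $\tilde p \in A[t]$ produces a standard étale $\tilde h : \tilde U = \Spec(A[t]/(\tilde p))_{\tilde p'} \to W$ with $\tilde U \times_W V \cong U$ over $V$; a direct computation of $L$-points via the fiber product then gives $\tilde h(\tilde U(L)) \cap V(L) = h(U(L))$. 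Finally, for the last clause of the fact: if $f : V \to W$ is étale and $O = \bigcup_i h_i(U_i(L))$ is étale open in $V(L)$, then $f(O) = \bigcup_i (f \circ h_i)(U_i(L))$ is a union of étale images, since each $f \circ h_i$ is a composition of étale morphisms.
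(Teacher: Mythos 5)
Your proposal is correct and is exactly the argument the paper has in mind: the paper gives no proof of this Fact, deferring to \cite{firstpaper} and the remark that it ``follows from standard facts on \'etale morphisms,'' and your verification (fiber products for intersections and preimages, composition for open immersions and for the final clause, and lifting a standard \'etale neighbourhood across a closed immersion) is the standard route taken there. The only detail worth tightening is in the closed-immersion step: the local structure theorem lets you put $h$ in standard \'etale form only after shrinking both the source \emph{and} the target, so you must also replace $V$ by an affine open $V\cap D(g)$ of $W$ before lifting the monic polynomial; this is harmless since $D(g)(L)$ is \'etale open in $W(L)$ and the resulting pieces glue by taking unions.
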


We refer to this system of topologies as the \textbf{\'etale open topology (over $L$)}.
We are not aware of any direct connection to the well-known \'etale topology.
We will sometimes refer to it as the $\Sa E_L$-topology when there are multiple fields in play.
%The connection to largeness arises through Fact~\ref{fact:discrete}.

%\begin{fact}
%\label{fact:discrete}
%The following are equivalent:
%\begin{enumerate}
%\item $L$ is large,
%\item the \'etale open topology on $L$ is not discrete,
%\item The \'etale open topology on $V(L)$ is not discrete for any $L$-variety $V$ with $V(L)$ infinite.
%\end{enumerate}
%\end{fact}

\section{Algebraic extensions}
\label{section:fin-ext}

Fact~\ref{fact:finite-extension} is \cite[Proposition 2.7]{Pop-little}.

\begin{fact}
\label{fact:finite-extension}
If $K$ is a subfield of $L$, $K$ is large, and $L/K$ is algebraic, then $L$ is large.
\end{fact}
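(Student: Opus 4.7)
The plan is to argue by contrapositive using the characterization from \cite{firstpaper}: a field $F$ is large if and only if $\Sa E_F$ is non-discrete on $F$. We assume $\Sa E_L$ is discrete and aim to deduce that $\Sa E_K$ is discrete, contradicting largeness of $K$.

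Since $\Sa E_L$ is discrete, by the affine-invariance remark there is an étale morphism $h : W \to \Aa^1_L$ of $L$-varieties with $h(W(L)) = \{0\}$. By standard spreading-out, $h$ and $W$ descend to some finitely generated, hence finite (as $L/K$ is algebraic), subextension $K' \subseteq L$: there is an étale $h_0 : W_0 \to \Aa^1_{K'}$ with $(h_0)_L = h$. Pick $w \in W(L)$ with $h(w) = 0$; its image $\eta_0 \in W_0$ satisfies $\kappa(\eta_0) \hookrightarrow L$, and since $\dim W_0 = 1$ while $L/K'$ is algebraic, $\eta_0$ cannot be the generic point, and so $\kappa(\eta_0)/K'$ is finite. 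After enlarging $K'$ to contain $\kappa(\eta_0)$---still a finite subextension of $L/K$---we obtain a $K'$-point of $W_0$ mapping to $0$, so $h_0(W_0(K')) = \{0\}$ and $\Sa E_{K'}$ is discrete. Hence it suffices to treat the case $L/K$ finite.

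For $L/K$ finite separable we apply Weil restriction. Let $n = [L : K]$, and suppose $\Sa E_L$-discreteness is witnessed by an étale $g : U \to \Aa^1_L$ with $g(U(L)) = \{0\}$. Form $\tilde g = \mathrm{Res}_{L/K} g : \tilde U \to \mathrm{Res}_{L/K} \Aa^1_L = \Aa^n_K$; since $L/K$ is étale, so is $\tilde g$. The universal property of Weil restriction identifies $\tilde U(K) = U(L)$ and $\Aa^n_K(K) = L = K^n$ in such a way that $\tilde g$ on $K$-points is $g$, whence $\{0\} \subseteq K^n$ is $\Sa E_K$-open. Applying Fact~\ref{fact:proj} to a coordinate projection $\Aa^n_K \to \Aa^1_K$, $\{0\} \subseteq K$ is $\Sa E_K$-open, so $\Sa E_K$ is discrete, as desired.

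For general finite $L/K$, let $K_s$ be the separable closure of $K$ inside $L$; then $K_s/K$ is separable and $L/K_s$ is purely inseparable. The separable case gives $K_s$ large, and we would conclude by invoking purely inseparable invariance of the étale open topology for $L/K_s$, which we expect from \cite{firstpaper} since étale morphisms are insensitive to radicial base change. The principal obstacle we anticipate is this purely inseparable step: the Weil restriction argument relies genuinely on separability, so if invariance under purely inseparable extensions is not on hand, a separate (probably short) argument will be needed. A minor technical point is the upgrading of the $L$-point of $W$ to a $K'$-point of $W_0$ during the descent, but this follows routinely by enlarging $K'$ to absorb the residue field of the image.
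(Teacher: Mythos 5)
Your overall strategy is the same as the paper's: use the characterization of largeness as non-discreteness of the \'etale open topology and argue in the contrapositive. The difference is that where the paper simply applies Fact~\ref{fact:down} (the going-down theorem of \cite{firstpaper}, stated for arbitrary algebraic extensions) to conclude that $\{0\} = \{0\} \cap K$ is $\Sa E_K$-open, you set out to prove the needed instance of that descent from scratch. Your spreading-out reduction to a finite subextension is fine, and the Weil restriction argument in the separable case is correct; this is essentially how Fact~\ref{fact:down} is proved. But the purely inseparable step is a genuine gap as written, and it is exactly the part you flag yourself: you invoke an ``invariance under purely inseparable extensions'' that you neither prove nor have a citation for. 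So for non-perfect $K$ the argument is incomplete. Within this paper the cheapest fix is to quote Fact~\ref{fact:down}, which is stated immediately before this result precisely so that the proof collapses to three lines.

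If you want a self-contained argument, note that your case split is actually unnecessary: Weil restriction along \emph{any} finite field extension, separable or not, preserves \'etaleness. The functor $\reslk$ is right adjoint to base change, and a square-zero thickening $\Spec(A/I) \hookrightarrow \Spec A$ of affine $K$-schemes base changes to the square-zero thickening $\Spec((A/I) \otimes_K L) \hookrightarrow \Spec(A \otimes_K L)$ (here $L$ is $K$-free, so $I \otimes_K L$ injects into $A \otimes_K L$ and squares to zero). Hence the unique-lifting property defining formal \'etaleness of $g$ transfers to $\reslk g$, and finite presentation is preserved, so $\reslk g$ is \'etale; the identifications $\reslk \Aa^1_L \cong \Aa^n_K$ and $(\reslk U)(K) = U(L)$ likewise need no separability. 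You only used the hypothesis ``$L/K$ \'etale'' to justify ``$\reslk g$ \'etale,'' and that hypothesis is not needed (one should also shrink $W$ to an affine open to guarantee the Weil restriction exists). With that observation your argument goes through for all finite, hence all algebraic, extensions, and the $K_s$ detour can be deleted.
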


There is a field $K$ and a finite extension $L/K$ such that $L$ is large and $K$ is not large~\cite{Srinivasan}.
Fact~\ref{fact:down} is \cite[Theorem 4.10]{firstpaper}.
The proof does not make use of largeness.
(The proof of Fact~\ref{fact:down}, like all other proofs of Fact~\ref{fact:finite-extension}, uses a form of Weil restriction.)

\begin{fact}
\label{fact:down}
Suppose that $K$ is a subfield of $L$, $L/K$ is algebraic, and $V$ is a $K$-variety.
The $\Sa E_K$-topology on $V(K)$ refines the topology induced by the $\Sa E_L$-topology on $V_L(L) = V(L)$.
\end{fact}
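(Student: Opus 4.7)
The plan is to use Weil restriction to pull \'etale $L$-covers of $V_L$ back to \'etale $K$-covers of $V$. By Fact~\ref{fact:basic-system} it suffices to show that for every \'etale morphism $h : W \to V_L$ the intersection $h(W(L)) \cap V(K)$ is $\Sa E_K$-open in $V(K)$. A standard descent argument for morphisms and schemes of finite presentation lets me assume $[L:K]<\infty$: any such $h$ and $W$, together with any chosen preimage in $W(L)$ of a point of $V(K)$, are defined over some finite subextension $L_0/K$ inside $L$, and proving the statement for $L_0/K$ already supplies the required $\Sa E_K$-open neighborhood of that point.

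I would then write the finite extension $L/K$ as a tower $K \subseteq K' \subseteq L$ with $K'/K$ separable and $L/K'$ purely inseparable and treat the two steps separately, the transitivity of ``refines'' allowing me to combine them. For the separable step (so $L/K$ is finite \'etale), given \'etale $h : W \to V_L$, I form the Weil restriction $\reslk h : \reslk W \to \reslk V_L$, which is \'etale because $\reslk$ preserves \'etaleness along finite \'etale extensions. Set
\[
W' := V \times_{\reslk V_L} \reslk W,
\]
where $V \to \reslk V_L$ is the adjunction unit corresponding to the identity of $V_L$, and let $h' : W' \to V$ be the first projection; then $h'$ is \'etale as a base change of an \'etale morphism. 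Using the Weil restriction identifications $(\reslk V_L)(K) = V(L)$ and $(\reslk W)(K) = W(L)$, one checks directly that
\[
W'(K) = \{(p,q) \in V(K) \times W(L) : h(q) = p\},
\]
so $h'(W'(K)) = h(W(L)) \cap V(K)$, which is therefore \'etale open in $V(K)$.

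The main obstacle is the purely inseparable step, where $L/K$ is not \'etale and Weil restriction can fail to preserve \'etaleness, so the construction above does not apply verbatim. I would handle it by a Frobenius descent argument: if $L^{p^n} \subseteq K$, then an \'etale $L$-morphism $h : W \to V_L$ admits, up to a Frobenius twist, a descent to an \'etale $K$-morphism whose $K$-points recover the intersection $h(W(L)) \cap V(K)$ under the identification of $V(K)$ with its image in $V(L)$. Combining the separable and purely inseparable refinements along the tower $K \subseteq K' \subseteq L$, together with the initial descent from algebraic to finite, then completes the proof.
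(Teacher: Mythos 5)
First, a caveat: the paper does not actually prove Fact~\ref{fact:down}; it quotes it as \cite[Theorem 4.10]{firstpaper} and only remarks that the proof there ``uses a form of Weil restriction.'' Your proposal therefore has to stand on its own, and it does follow the indicated strategy. The reduction to a finite subextension $L_0/K$ is fine (do note explicitly that $h_0(W_0(L_0))\cap V(K)\subseteq h(W(L))\cap V(K)$, because $L_0$-points of $W_0$ base change to $L$-points of $W$), and the separable step via $W' = V\times_{\reslk V_L}\reslk W$ is exactly right, modulo the standard caveat that $\reslk W$ exists as a scheme only when finite sets of points of $W$ lie in a common affine open; so first replace $W$ by an affine open neighborhood of the chosen preimage $q$, which preserves \'etaleness and harmlessly shrinks the basic open set.

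The one genuine soft spot is the purely inseparable step, which you only gesture at, and the reason you give for isolating it is not correct. Two remarks. (i) The detour is unnecessary: Weil restriction along \emph{any} finite locally free extension preserves \'etale morphisms. Indeed $(\reslk X)(A) = X(A \otimes_K L)$, and if $A' \to A$ is a square-zero extension of $K$-algebras then $A' \otimes_K L \to A \otimes_K L$ is a square-zero extension of $L$-algebras, so formal \'etaleness passes through $\reslk$ (and finite presentation is preserved as well). Hence your separable-case argument works verbatim for an arbitrary finite extension, and no splitting of the tower is needed. (ii) If you do want a separate Frobenius-descent route, the clean tool is topological invariance of the \'etale site: $V_L \to V$ is a universal homeomorphism when $L/K$ is purely inseparable, so $h : W \to V_L$ descends uniquely to an \'etale $h' : W' \to V$ with $W'_L \cong W$ over $V_L$. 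Then $h'(W'(K)) = h(W(L)) \cap V(K)$, because for $p \in V(K)$ the fiber of $W'$ over $p$ is an \'etale $K$-scheme, i.e.\ a disjoint union of spectra of finite separable extensions of $K$, and an $L$-point of such a scheme with $L/K$ purely inseparable must already be a $K$-point. As written, your final paragraph asserts the descent rather than proving it, so you should supply one of these two completions.
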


We view Fact~\ref{fact:down} as a topological refinement of Fact~\ref{fact:finite-extension}. 
We prove Fact~\ref{fact:finite-extension}.

\begin{proof}
Suppose that $L/K$ is algebraic and $L$ is not large.
Then the $\Sa E_L$-topology on $L$ is discrete, so $\{0\}$ is an $\Sa E_L$-open subset of $L$.
By Fact~\ref{fact:down} $\{0\} = \{0\} \cap K$ is an $\Sa E_K$-open subset of $K$.
So the $\Sa E_K$-topology on $K$ is discrete, so $K$ is not large.
\end{proof}

\section{Fact A}
\label{section:density}

We first prove Proposition A.

\begin{proof}
Suppose that $O$ is not Zariski dense in $\Aa^m_L$ and let $W$ be the Zariski closure of $U$ in $\Aa^m_L$.
So $\dim W < m$.
Fix $p \in O$.
A typical line in $\Aa^m_L$ passing through $p$ will intersect $W$ in only finitely many points.
So there is a closed immersion $g : \Aa^1_L \to \Aa^m_L$ such that $g(0) = p$ and $g(\Aa^1_L) \cap W$ is finite.
Let $O'$ be the preimage of $O$ under the induced map $L \to L^m$.
So $O'$ is a nonempty finite $\Sa T$-open subset of $L$, hence $\Sa T$ is discrete, contradiction.
\end{proof}

We now prove the following stronger version of Fact A.

\begin{proposition}
\label{prop:A-strong}
Suppose that $L$ is large, $V$ is an irreducible $L$-variety, and $O$ is an \'etale open subset of $V(L)$ which contains a smooth $L$-point.
Then $O$ is Zariski dense in $V$.
\end{proposition}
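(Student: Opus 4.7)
The plan is to reduce the statement to Proposition~A by using the standard local structure of smooth points. Set $m = \dim V$ and fix a smooth $L$-point $p \in O$. As already recalled in the paper's discussion of proof technique, smoothness of $p$ yields an open subvariety $U \subseteq V$ containing $p$ together with an \'etale morphism $f : U \to \Aa^m_L$. Since the inclusion $U \hookrightarrow V$ is an open immersion, axiom (2) of a system of topologies tells us that $U(L) \to V(L)$ is a topological open embedding, so $O \cap U(L)$ is an \'etale open subset of $U(L)$ and it is nonempty as it contains $p$. By Fact~\ref{fact:basic-system}, $f(O \cap U(L))$ is then a nonempty \'etale open subset of $\Aa^m_L(L) = L^m$.

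Now apply Proposition~A to the \'etale open topology $\Sa{E}_L$: since $L$ is large, $\Sa{E}_L$ is non-discrete, so $f(O \cap U(L))$ is Zariski dense in $\Aa^m_L$.

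It remains to pull this density back through $f$ to conclude that $O$ is Zariski dense in $V$. Let $Z$ be the Zariski closure of $O$ in $V$ and suppose for contradiction that $Z \neq V$. Since $V$ is irreducible of dimension $m$, we have $\dim Z < m$. Then $O \cap U(L) \subseteq (Z \cap U)(L)$, hence
\[
f(O \cap U(L)) \;\subseteq\; f\bigl((Z \cap U)(L)\bigr) \;\subseteq\; \overline{f(Z \cap U)}(L),
\]
where the overline denotes Zariski closure in $\Aa^m_L$. Using the standard inequality $\dim \overline{f(Z \cap U)} \le \dim(Z \cap U) \le \dim Z < m$, the set $\overline{f(Z \cap U)}$ is a proper closed subvariety of $\Aa^m_L$, contradicting the Zariski density of $f(O \cap U(L))$ established in the previous step.

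The only step that requires any care is the descent at the end: one must know that the image of a lower-dimensional closed subvariety under the \'etale morphism $f$ is still lower-dimensional, which I would invoke as a standard dimension-theoretic fact (and which is in fact automatic here since \'etale morphisms are quasi-finite, so dimensions are preserved on closed subvarieties). Everything else is a direct reduction to Proposition~A using the axioms of a system of topologies and Fact~\ref{fact:basic-system}.
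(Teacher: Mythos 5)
Your proof is correct and follows essentially the same route as the paper's: pass to an open $U \ni p$ with an \'etale $f : U \to \Aa^m_L$, note that $f(O \cap U(L))$ is a nonempty \'etale open subset of $L^m$ and hence Zariski dense by Proposition A, and then derive a contradiction from the dimension of the closure of the image of $Z \cap U$. The only cosmetic difference is that the paper separately disposes of the trivial case $m = 0$, which your argument also handles implicitly.
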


\begin{proof}
Fix a smooth $p \in O$ and let $m = \dim V$.
The case $m = 0$ is trivial so we suppose $m \ge 1$.
Fix an open subvariety $U$ of $V$ containing $p$ and an \'etale morphism $f : U \to \Aa^m_L$.
Let $P = f(U(L) \cap O)$, so $P$ is a non-empty \'etale open subset of $L^m$.
Suppose that $O$ is not Zariski dense in $V$ and let $W$ be the Zariski closure of $O$ in $V$.
Then $\dim W < m$ hence $\dim U \cap W < m$, and the Zariski closure of $f(U \cap W)$ has dimension $< m$.
Therefore $P \subseteq f(U \cap W)$ is not Zariski dense in $\Aa^m_L$, contradiction.
\end{proof}

\section{Many $L$-points}
\label{section:very-large}
Before proving Fact B we prove the following related result.

\begin{proposition}
\label{prop:very-large}
Suppose that $L$ is large, $V$ is an irreducible $L$-variety, and $O$ is a nonempty \'etale open subset of $V(L)$ which contains a smooth $L$-point.
Then $|O| = |L|$.
\end{proposition}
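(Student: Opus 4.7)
The plan is to follow the same two-step reduction used in Proposition~\ref{prop:A-strong}, and then handle the resulting one-dimensional cardinality statement.

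Set $m = \dim V$; the case $m = 0$ is vacuous, so assume $m \ge 1$. Fix a smooth $p \in O$, an affine open neighborhood $U \subseteq V$ of $p$, and an \'etale morphism $f : U \to \Aa^m_L$. Then $P := f(U(L) \cap O)$ is a nonempty \'etale open subset of $L^m$ by Fact~\ref{fact:basic-system}. \'Etale morphisms have finite fibers, so the surjection $U(L) \cap O \twoheadrightarrow P$ gives $|O| \ge |U(L) \cap O| \ge |P|$; combined with the trivial bound $|O| \le |V(L)| \le |L|$, the problem reduces to showing $|P| = |L|$.

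By Fact~\ref{fact:proj} the first-coordinate projection $\pi : L^m \to L$ is $\Sa T$-open, so $\pi(P) \subseteq L$ is a nonempty \'etale open subset of $L$, and $|P| \ge |\pi(P)|$ since $\pi|_P$ surjects onto $\pi(P)$. This reduces everything to the one-dimensional claim: if $L$ is large and $Q \subseteq L$ is a nonempty \'etale open subset, then $|Q| = |L|$.

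This one-dimensional claim is the main obstacle. When $L$ is countably infinite it is immediate from non-discreteness of $\Sa E_L$: no nonempty \'etale open of $L$ can be finite, so $|Q| \ge \aleph_0 = |L|$. When $L$ is uncountable a genuine argument is needed. My plan is to argue by contradiction: assuming $|Q| < |L|$, the subfield $K \subseteq L$ generated by $Q$ satisfies $|K| = \max(|Q|, \aleph_0) < |L|$, so $K$ is a proper subfield with $Q \subseteq K$; by affine invariance every translate $Q + a$ is \'etale open, and translates by representatives of distinct additive cosets of $K$ in $L$ are pairwise disjoint, exhibiting $|L|$-many pairwise disjoint nonempty \'etale opens in $L$. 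The remaining task is to convert this into a contradiction without invoking Proposition~B (whose proof belongs to the next section), for instance by pulling $Q$ back along a suitable morphism $L^2 \to L$ and applying Fact~\ref{fact:proj} to the resulting \'etale open in $L^2$ in order to locate a point of $Q$ outside $K$. Closing this last step in a way that stays internal to the present section is where I expect the real work to lie.
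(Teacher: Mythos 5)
Your two reductions — from $V(L)$ to a nonempty \'etale open subset of $L^m$ via an \'etale chart, and from $L^m$ to $L$ via Fact~\ref{fact:proj} — are exactly the reductions the paper makes (it routes the second one through the general Proposition~\ref{prop:very-large-gen} for systems of topologies, but the content is the same). The problem is that after these reductions nothing has been proved yet: the one-dimensional claim \emph{is} the proposition, and your proposal does not close it. Worse, the route you sketch for the uncountable case cannot work as stated. If $K$ is the subfield generated by $Q$ then $Q \subseteq K$ by construction, so there is no hope of ``locating a point of $Q$ outside $K$''; and exhibiting $|L|$-many pairwise disjoint nonempty $\Sa T$-open subsets of $L$ is not a contradiction — a non-discrete system of topologies carries no separability or cellularity constraint (the order topology on a large real closed field already has arbitrarily large families of disjoint nonempty opens), so this line terminates without yielding anything.

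The missing idea is a multiplicative rescaling trick. Translate so that $0 \in Q$ (affine invariance). If $QQ^{-1} \ne L$, pick $a \in L^\times \setminus QQ^{-1}$; then $Q \cap aQ = \{0\}$, since any nonzero $x \in Q \cap aQ$ writes $a$ as a ratio of two elements of $Q$. But $aQ$ is $\Sa T$-open (again affine invariance), so $Q \cap aQ$ is a nonempty finite open set, contradicting non-discreteness. Hence $QQ^{-1} = L$, and therefore $|L| = |QQ^{-1}| \le |Q|^2 = |Q|$ since $L$ is infinite. This single observation replaces your entire case analysis (and, via Lemma~\ref{lem:large-difference}, it is also the engine behind Proposition~B, which is why the paper isolates it as a Claim inside Proposition~\ref{prop:very-large-gen}). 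One small further nit: the case $m = 0$ is not ``vacuous'' — a zero-dimensional irreducible $V$ with a smooth $L$-point makes the statement's hypotheses hold while $|O| = 1$ — so positive dimension is really an implicit hypothesis here, as it is explicitly in Fact~B.
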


Proposition~\ref{prop:very-large} follows from a more general fact.

\begin{proposition}
\label{prop:very-large-gen}
Suppose that $\Sa T$ is a non-discrete system of topologies over $L$ and $O$ is a nonempty $\Sa T$-open subset of $L^m$.
Then $|O| = |L|$.
\end{proposition}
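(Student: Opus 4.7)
The plan is to show $|O| \geq |L|$; the reverse inequality is automatic since $|L^m| = |L|$. First I would verify that $O$ is infinite: otherwise $\pi_1(O)$ would be a nonempty finite $\Sa T$-open subset of $L$ by Fact~\ref{fact:proj}, contradicting non-discreteness.

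The core is a pencil-of-lines argument, which requires ambient affine dimension at least $2$; to handle the case $m = 1$ uniformly I would pass to $O \times O \subseteq L^{2m}$. This is $\Sa T$-open, being $\pi_1^{-1}(O) \cap \pi_2^{-1}(O)$, and it is nonempty. Fix $p \in O$, so $(p, p) \in O \times O$, and consider the affine lines in $\Aa^{2m}_L$ through $(p, p)$: these are parameterized by $\Pp^{2m-1}(L)$, a set of cardinality $|L|$ because $L$ is infinite and $2m \geq 2$. For each such line, its natural parameterization $\Aa^1_L \to \Aa^{2m}_L$ sending $0$ to $(p, p)$ is a morphism of $L$-varieties, so by condition (1) the induced map $L \to L^{2m}$ is continuous, making the preimage of $O \times O$ a nonempty $\Sa T$-open subset of $L$ containing $0$. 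By non-discreteness this preimage properly contains $\{0\}$, so the line meets $O \times O$ in some point other than $(p, p)$. Two distinct lines through $(p, p)$ meet only at $(p, p)$, so these extra points are pairwise distinct across the $|L|$ choices of line, exhibiting $|L|$ distinct elements of $(O \times O) \setminus \{(p, p)\}$.

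Therefore $|O \times O| \geq |L|$; since $O$ is infinite, $|O|^2 = |O|$, and $|O| \geq |L|$ follows. The main obstacle is the genuine dimension-one case, where $\Pp^0(L)$ is a single point and no pencil of lines is available; the product trick $O \rightsquigarrow O \times O$ circumvents this by doubling the ambient dimension while only squaring the cardinality. The argument uses only condition (1), Fact~\ref{fact:proj}, and non-discreteness, so it applies to every system of topologies, not just the \'etale open topology.
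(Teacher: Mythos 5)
Your proof is correct, but it takes a genuinely different route from the paper's. The paper first projects to a single coordinate (via Fact~\ref{fact:proj}) to reduce to $m=1$, translates so that $0 \in O$, and then runs a multiplicative argument: if $a \in L^\times$ were not in $OO^{-1}$ then $O \cap aO = \{0\}$ would be a nonempty finite $\Sa T$-open set, so in fact $OO^{-1} = L$ and hence $|O| = |L|$. You instead square $O$ to get into ambient dimension $\ge 2$ and run a pencil-of-lines argument through a point of $O \times O$, harvesting one new point per line from the $|L|$-many lines; this is essentially the higher-dimensional cousin of the line trick the paper uses to prove Proposition A, and all your steps (openness of $\pi_i^{-1}(O)$, continuity along each line, infinitude of nonempty open subsets of $L$) are justified by the stated axioms and the affine-invariance remark. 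The trade-off: your argument is arguably more geometric and avoids the reduction to $m=1$, but the paper's multiplicative claim $OO^{-1} = L$ (in the form $(O-b)(O-b)^{-1} = L$) is not merely a device for this proposition --- it is reused verbatim in the proof of Proposition B, where it feeds into Lemma~\ref{lem:large-difference}. So if you adopted your proof, you would still need to establish the paper's claim separately later.
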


\begin{proof}
Let $\pi : L^m \to L$ be a coordinate projection.
By Fact~\ref{fact:proj} $\pi(O)$ is $\Sa T$-open.
As $|O| \ge |\pi(O)|$ we may suppose that $m = 1$.
\begin{claim-star}
If $O$ contains $0$ then $OO^{-1} = L$, hence $|O| = |L|$.
\end{claim-star}
\begin{claimproof}
Suppose that $O$ contains $0$ and $OO^{-1} \ne L$.
Fix $a \in L^\times \setminus OO^{-1}$.
Then $O \cap aO = \{0\}$.
However, $O \cap aO$ is $\Sa T$-open, so $\Sa T$ is discrete, contradiction.
\end{claimproof}
Note that if $b \in O$ then $|O| = |O - b| = |L|$.
\end{proof}

We now prove Proposition~\ref{prop:very-large}.

\begin{proof}
Suppose that $p \in O$ is smooth.
Let $m = \dim V$, $U$ be an open subvariety of $V$, and $f : U \to \Aa^m_L$ be an \'etale morphism.
Then $f(U(K) \cap O)$ is a nonempty \'etale open subset of $L^m$.
Apply Proposition~\ref{prop:very-large-gen}.
\end{proof}

\section{Fact B}
\label{section:new-points}
We will need a couple lemmas.
Fact~\ref{fact:linear-algebra} is a special case of \cite[Lemma 3]{Fehm-subfield}.

\begin{fact}
\label{fact:linear-algebra}
Suppose that $F$ is a field, $S$ is an $F$-vector space of dimension $\ge 2$, $I$ is an index set, and $S_i$ is a one-dimensional subspace of $S$ for all $i \in I$.
If $S = \bigcup_{i \in I} S_i$ then $|I| \ge |F|$.
\end{fact}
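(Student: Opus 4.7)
The plan is to reduce immediately to the two-dimensional case and exhibit $|F|$ pairwise distinct one-dimensional subspaces, each of which must coincide with some $S_i$. First I would choose linearly independent vectors $u, v \in S$, which is possible since $\dim_F S \ge 2$. For each scalar $\lambda \in F$ the vector $u + \lambda v$ is nonzero (otherwise $u,v$ would be dependent), so by hypothesis it lies in some $S_{i(\lambda)}$; since $S_{i(\lambda)}$ is one-dimensional and contains this nonzero vector, it must equal the span of $u + \lambda v$.

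The key step is to check that the assignment $\lambda \mapsto S_{i(\lambda)}$ is injective, for then $\{S_{i(\lambda)} : \lambda \in F\}$ is a family of $|F|$ distinct $S_i$ and we conclude $|I| \ge |F|$. Suppose $u + \lambda v$ and $u + \mu v$ span the same line. Then $u + \lambda v = \alpha(u + \mu v)$ for some $\alpha \in F^\times$, whence $(1 - \alpha) u + (\lambda - \alpha \mu) v = 0$; linear independence of $u, v$ forces $\alpha = 1$ and then $\lambda = \mu$.

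I do not anticipate any real obstacle; this is pure linear algebra and nothing here uses anything beyond the definition of linear independence. The only minor point to watch is that the injectivity argument must work uniformly for finite and infinite $F$ (in particular for $F = \Ff_2$, where $F^\times$ is trivial), but the algebraic manipulation above handles all cases at once without a separate count.
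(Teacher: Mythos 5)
Your proof is correct and is essentially the same as the paper's argument: the paper fixes a line $S'$ and a vector $a \notin S'$ and observes that each $S_i$ meets the affine line $a + S'$ in at most one point, which is precisely your injectivity of $\lambda \mapsto S_{i(\lambda)}$ for the affine line $\{u + \lambda v : \lambda \in F\}$. No gaps.
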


\begin{proof}
Fix a one-dimensional subspace $S'$ of $S$ and $a \in S \setminus S'$.
Then $|S'| = |F|$ and it is easy to see that $|S_i \cap (a + S')| \le 1$ for all $i \in I$.
\end{proof}

Lemma~\ref{lem:large-difference} is essentially in the proof of \cite[Lemma 4]{Fehm-subfield}.
Recall our standing assumption that $L$ is infinite.

\begin{lemma}
\label{lem:large-difference}
Suppose that $K$ is a proper subfield of $L$ and $X \subseteq K$ satisfies $XX^{-1} = L$.
Then $| X \setminus K| = |L|$.
\end{lemma}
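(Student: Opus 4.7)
The plan is to exhibit $L$ as a union of one-dimensional $K$-subspaces indexed by a set whose size is controlled by $|X \setminus K|$, and then invoke Fact~\ref{fact:linear-algebra}; since $K \subsetneq L$, the $K$-dimension of $L$ is at least two, so the fact applies. (The statement presumably intends $X \subseteq L$, otherwise $X \setminus K$ is empty.)

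I would set $Y = X \cap K$ and $Z = X \setminus K$, and decompose
\[ L = XX^{-1} = YY^{-1} \cup YZ^{-1} \cup ZY^{-1} \cup ZZ^{-1}. \]
The piece $YY^{-1}$ sits inside the one-dimensional subspace $K$. For each $z \in Z$, $Yz^{-1} \subseteq Kz^{-1}$ and $zY^{-1} \subseteq Kz$, so $YZ^{-1} \cup ZY^{-1}$ is contained in the union of the $2|Z|$-indexed family $\{Kz, Kz^{-1} : z \in Z\}$ of one-dimensional subspaces. Finally $ZZ^{-1}$ has at most $|Z|^2$ elements, each sitting in its own one-dimensional subspace $Ka$. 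So $L$ is covered by at most $1 + 2|Z| + |Z|^2$ one-dimensional $K$-subspaces.

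Fact~\ref{fact:linear-algebra} then gives $|K| \le 1 + 2|Z| + |Z|^2$. If $|Z|$ were finite, this would force $|K|$ finite, hence $|X|$ finite, hence $|L| = |XX^{-1}|$ finite, contradicting the standing hypothesis that $L$ is infinite. So $|Z|$ is infinite, $1 + 2|Z| + |Z|^2 = |Z|$, and thus $|K| \le |Z|$. Each of the $\le |Z|$ covering subspaces has exactly $|K|$ elements, so $|L| \le |Z| \cdot |K| = |Z|$; combined with $Z \subseteq L$ this yields $|Z| = |L|$ as required.

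The main obstacle I foresee is the covering step: one must recognize that $YZ^{-1}$ and $ZY^{-1}$ decompose naturally into families of lines parametrized by $z \in Z$ rather than by $y \in Y$, so that the size of the cover is controlled by $|Z|$ (the quantity we want to bound from below) rather than by $|Y|$ (which could be as large as $|L|$). After this structural observation, the rest is formal cardinal arithmetic.
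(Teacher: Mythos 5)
Your proof is correct and takes essentially the same route as the paper's: the same four-way decomposition of $XX^{-1} = L$ according to $X \cap K$ and $X \setminus K$, the same covering of $L$ by at most $1 + 2|Z| + |Z|^2$ one-dimensional $K$-subspaces, and the same appeal to Fact~\ref{fact:linear-algebra}. The only (minor) difference is the endgame: the paper first notes $|X|=|L|$ and splits into the cases $|K| < |L|$ and $|K| = |L|$, whereas you deduce $|L| \le |Z| \cdot |K| = |Z|$ directly from the covering, which slightly streamlines the cardinal arithmetic.
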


\begin{proof}
Note that $|X| = |L|$.
Suppose that $|K| < |L|$.
Then $|X| = |L| > |K|$ so $|X \setminus K| = |L|$.
So we may suppose that $|K| = |L|$.
It now suffices to show that $|X \setminus K| \ge |K|$.
We let $Y^\times = Y \setminus \{0\}$ for any $Y \subseteq L$.
Let $A = X \cap K$ and $B = X \setminus K$.
So
\begin{align*}
L &= \{ a/b : a \in X, b \in X^\times \}\\
   &= \{ a/b : a \in A, b \in A^\times \} \cup \{ a/b : a \in A, b \in B^\times \} \cup \{ a/b : a \in B, b \in A^\times \} \cup \{ a/b : a \in B, b \in B^\times \} \\
&\subseteq K \cup \left(\bigcup_{b \in B^\times} (1/b)K \right) \cup \left(\bigcup_{a \in B} a K\right) \cup \left(\bigcup_{a \in B, b \in B^\times} (a/b) K\right). 
\end{align*}
Consider $L$ to be a $K$-vector space.
So $L$ is a union of $\le 1 + 2|B| + |B|^2$ one-dimensional subspaces.
By Fact~\ref{fact:linear-algebra} we have $1 + 2|B| + |B|^2 \ge |K|$.
As $K$ is infinite $|B| \ge |K|$.
\end{proof}

We now prove Proposition B.

\begin{proof}
Let $\pi : L^m \to L$ be the projection onto the first coordinate.
By Fact~\ref{fact:proj} $\pi(O)$ is $\Sa T$-open.
We have $\pi(O) \setminus K \subseteq \pi(O \setminus K^m)$, so it suffices to show that $|\pi(O) \setminus K| = |L|$.
So we may suppose that $O$ is an $\Sa T$-open subset of $L$.
By the proof of Proposition~\ref{prop:very-large-gen} $(O - b)(O - b)^{-1} = L$ for any $b \in O$.
So $|O| = |L|$.
So if $O \cap K = \emptyset$ we are done.
Suppose otherwise and fix $b \in O \cap K$.
By the claim $(O - b)(O - b)^{-1} = L$ so by Lemma~\ref{lem:large-difference} $|(O - b) \setminus K| = |L|$.
Note that $x \mapsto x + b$ gives a bijection $(O - b) \setminus K \to O \setminus K$.
\end{proof}

We now prove Fact B.

\begin{proof}
Let $p$ be a smooth $K$-point of $V$ and $m = \dim V$.
As $V$ is irreducible there is an open subvariety $U$ of $V$ containing $p$ and an \'etale morphism $f : U \to \Aa^m_K$.
Let $O = f_L(U_L(L))$, note that $f_L$ is \'etale as \'etale morphisms are closed under base change.
Then $O$ is an \'etale image in $\Aa^m_L(L) = L^m$ and is hence \'etale open.
By Proposition B $|U \setminus K^m| = |L|$.
Note that if $p \in U(K)$ then $f_L(p) = f(p) \in K^m$.
\end{proof}

\newpage
\section{Fact C}
We now prove Proposition C.
Given $a = (a_1,\ldots,a_m) \in L^m$ we let $K(a) = K(a_1,\ldots,a_m)$.

\begin{proof}
We apply induction on $m$.
Suppose $m = 1$.
Let $K'$ be the algebraic closure of $K$ in $L$.
So $K'$ is a proper subfield of $L$.
By Proposition B there is $a \in U \setminus K'$.
So $\trdg(K(a)/K) = 1$.
Suppose $m \ge 2$.
Let $\pi : K^m \to K^{m - 1}$ be the projection away from the first coordinate.
By Fact~\ref{fact:proj} $\pi(U)$ is $\Sa T$-open.
By induction there is $b \in \pi(U)$ such that $\trdg(K(b)/K) = m - 1$.
Let $U_b = \{ c \in L : (b,c) \in U \}$.
Note that $U_b$ is the pre-image of $U$ under the map $K \to K^m$ given by $x \mapsto (b,x)$.
So $U_b$ is $\Sa T$-open.
As $\trdg(L/K) \ge m$ we have $\trdg(L/K(b)) \ge 1$.
So there is $c \in U_b$ such that $\trdg(K(b,c)/K(b)) = 1$.
Let $a = (b,c)$.
\end{proof}

We now prove a stronger version of the second form of Fact C.

\begin{proposition}
\label{prop:C-strong}
Suppose that $L$ is large, $K$ is a subfield of $L$ with $\trdg(L/K) \ge m$, and $V$ is a smooth geometrically irreducible $m$-dimension $K$-variety.
Then the set of $p \in V(L)$ such that $p \notin W(L)$ for any proper closed subvariety $W$ of $V$ is \'etale open dense in $V(L) = V_L(L)$.
\end{proposition}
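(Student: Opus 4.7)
The plan is to establish density directly. Given an arbitrary nonempty \'etale open subset $O \subseteq V(L)$, I will produce a point $p \in O$ that lies on no proper closed $K$-subvariety of $V$. The strategy is to transport the genericity problem on $V$ to the analogous problem on $L^m = \Aa^m_L(L)$ via a local \'etale chart, and then invoke Proposition~C.

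First, fix some $q \in O$. Since $V$ is smooth of dimension $m$, the local structure theorem for smooth varieties yields an affine open $K$-subvariety $U \subseteq V$ with $q \in U(L)$ together with an \'etale $K$-morphism $f : U \to \Aa^m_K$. Set $O' := O \cap U(L)$, a nonempty \'etale open subset of $V(L)$, and observe that the base change $f_L : U_L \to \Aa^m_L$ is \'etale since \'etale morphisms are stable under base change. By Fact~\ref{fact:basic-system} the image $P := f_L(O')$ is then a nonempty \'etale open subset of $L^m$.

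Since $\trdg(L/K) \ge m$, Proposition~C applied to $P$ furnishes some $a = (a_1, \ldots, a_m) \in P$ with $\trdg(K(a)/K) = m$; equivalently, $a$ does not lie in $Y(L)$ for any proper closed $K$-subvariety $Y$ of $\Aa^m_K$. Pick any $p \in O'$ with $f_L(p) = a$; I claim $p$ is the desired generic point.

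To verify this, suppose for contradiction that $p \in W(L)$ for some proper closed $K$-subvariety $W$ of $V$. Irreducibility of $V$ forces $\dim W < m$, so $Z := W \cap U$ is a closed $K$-subscheme of $U$ of dimension $< m$ containing $p$. Because $f$ is \'etale (hence of relative dimension zero), the Zariski closure $Y$ of $f(Z)$ in $\Aa^m_K$ satisfies $\dim Y \le \dim Z < m$ and is therefore a proper closed $K$-subvariety of $\Aa^m_K$ containing $a = f_L(p)$, contradicting the choice of $a$. The main obstacle is precisely this descent step: one must check that a proper $K$-subvariety of $V$ witnessing non-genericity of $p$ yields, through $f$, a proper $K$-subvariety of $\Aa^m_K$ witnessing non-genericity of $a$. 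This hinges on the standard fact that \'etale morphisms preserve the dimension of closed subschemes; once it is in hand, the reduction to Proposition~C is mechanical.
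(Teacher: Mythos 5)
Your proof is correct and follows essentially the same route as the paper: pass to an \'etale chart $f : U \to \Aa^m_K$, push $O$ forward to a nonempty \'etale open subset of $L^m$ via Fact~\ref{fact:basic-system}, apply Proposition C, and pull the resulting generic point back, using that a proper closed subvariety of the irreducible $V$ has dimension $< m$ and that morphisms do not raise the dimension of (closures of) images. The only difference is cosmetic: you choose the chart $U$ around a point of $O$ directly via smoothness, whereas the paper fixes $U$ first and invokes Proposition~\ref{prop:A-strong} to see that $O$ meets $U_L(L)$; both work, and your descent step is if anything stated more carefully (intersecting $W$ with $U$ before taking the image).
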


\begin{proof}
Suppose that $O$ is a nonempty \'etale open subset of $V(L)$.
As $V$ is smooth and irreducible there is an open subvariety $U$ of $V$ and an \'etale morphism $f : U \to \Aa^m_K$.
By Proposition~\ref{prop:A-strong} $O$ intersects $U_L(L)$.
Let $P = f_L(U_L(L) \cap O)$, so $P$ is nonempty \'etale open subset of $L^m$.
By Proposition C there is $a \in P$ such that $\trdg(K(a)/K) = m$.
Fix $p \in O \cap U_L(L)$ such that $f_L(p) = a$.
Suppose $W$ is a proper closed subvariety of $V$, and let $W'$ be the Zariski closure of $f_L(W_L)$.
Then $\dim W < m$, so $\dim W_L < m$, so $\dim W' < m$.
Therefore $a \notin W'$, so $p \notin W_L(L) = W(L)$.
\end{proof}

\section{Images of finite morphisms}
\label{section:finite}

This striking question was posted on math overflow by Lampe in 2009 (Question 6820).

\begin{question}
\label{ques:lampe}
Suppose that $|K| \ge \aleph_0$, $f \in K[t]$, and $f(K) \ne K$.
Must $K \setminus f(K)$ be infinite?
\end{question}

%It is easy to see that Question~\ref{ques:lampe} has a positive answer when $K$ is Hilbertian, this covers the case when $K$ is finitely generated over its prime subfield.
This question was essentially asked by Reineke who conjectured that if every non-constant polynomial map $K \to K$ has cofinite image then $K$ is finite or algebraically closed, see \cite[Conjecture 6]{wagner-minimal-fields}.
A proof of Reineke's conjecture would answer Question~\ref{ques:lampe}.
The Reineke conjecture also implies the Podewski conjecture.
Fact~\ref{fact:kosters} is due to Kosters~\cite{kosters}.

\begin{fact}
\label{fact:kosters}
Suppose that $K$ is perfect and large and $f \in K[t]$ satisies $f(K) \ne K$.
Then $|K \setminus f(K)| = |K|$.
\end{fact}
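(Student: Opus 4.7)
The plan is to show that $K \setminus f(K)$ is a nonempty $\Sa E_K$-open subset of $K$, and then to invoke Proposition~\ref{prop:very-large-gen} with $m = 1$ to conclude its cardinality is $|K|$.

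A small reduction first uses perfectness of $K$. If $f$ is constant then $f(K)$ is a singleton and $|K \setminus f(K)| = |K|$ since $K$ is infinite. Otherwise, in characteristic $p > 0$, write $f(t) = g(t^{p^r})$ with $g \in K[t]$ not of the form $h(t^p)$; then $g$ is separable, and since $K$ is perfect the map $x \mapsto x^{p^r}$ is a bijection of $K$, so $f(K) = g(K^{p^r}) = g(K)$. Replacing $f$ by $g$, we may assume $f$ is nonconstant and separable, and the induced morphism $\tilde f : \Aa^1_K \to \Aa^1_K$ is then a finite generically étale morphism of $K$-varieties.

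The core step is to show that $f(K) = \tilde f(\Aa^1_K(K))$ is $\Sa E_K$-closed in $\Aa^1_K(K) = K$. This will come from the unpublished JTWY input promised for this section, which one expects to state that the image of a finite morphism of $K$-varieties on $K$-points is $\Sa E_K$-closed in the target (any auxiliary separability assumption being automatic when $K$ is perfect). Granted this, $K \setminus f(K)$ is $\Sa E_K$-open in $K$ and nonempty by hypothesis. Since $K$ is large, the $\Sa E_K$-topology on $K$ is non-discrete, and Proposition~\ref{prop:very-large-gen} applied to the system of étale open topologies with $m = 1$ and $O = K \setminus f(K)$ yields $|K \setminus f(K)| = |K|$.

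The main obstacle is the JTWY input on $\Sa E_K$-closedness of images of finite morphisms. The natural route is to excise the branch locus of $\tilde f$ to reduce to the finite étale case (the branch locus is Zariski closed of codimension one and contributes only finitely many problematic points of $K$, which are harmless for the cardinality count), and then to use the standard étale-local trivialization of finite étale covers to show that a $K$-point $y$ of the target whose fiber algebra has no $K$-factor sits in an $\Sa E_K$-neighborhood disjoint from the image. Once this black box is available, the rest of the argument is a one-line application of Proposition~\ref{prop:very-large-gen}, so all of the real work lies in this étale-topological input.
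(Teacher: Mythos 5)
Your proof is correct and follows the paper's route exactly: the paper obtains this fact as the special case $V = W = \Aa^1_K$, $\lins = K$ of Fact~\ref{fact:finite-image}, whose proof is precisely ``the image of a finite morphism is \'etale-closed by the unpublished JTWY theorem, hence the complement is a nonempty \'etale open subset of $K$, hence has cardinality $|K|$ by Proposition~\ref{prop:very-large-gen}.'' Your preliminary reduction to separable $f$ is harmless but unnecessary, since Theorem~\ref{thm:unpub} as stated assumes only that the base field is perfect and places no separability hypothesis on the finite morphism.
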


Fact~\ref{fact:finite-image} is a generalization of Fact~\ref{fact:kosters} due to Bary-Soroker, Geyer, and Jarden~\cite{bsgj}.
We let $\lins$ be the maximal inseparable extension of $L$.

\begin{fact}
\label{fact:finite-image}
Suppose that $L$ is large, $f : V \to W$ is a finite morphism of irreducible $L$-varieties, and there is smooth $p \in W(L)$ such that $p \notin f(V(\lins))$.
Then $|W(L) \setminus f(V(\lins))| = |L|$.
In particular if $L$ is perfect then $|W(L) \setminus f(V(L))| = |L|$.
\end{fact}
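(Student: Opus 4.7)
The plan is to follow the template used for Facts A and B: show that $W(L) \setminus f(V(\lins))$ is a nonempty \'etale open subset of $W(L)$ containing a smooth $L$-point, and then invoke Proposition~\ref{prop:very-large} to upgrade this to $|W(L) \setminus f(V(\lins))| = |L|$. The statement for perfect $L$ is then immediate since $\lins = L$ in that case.

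To set up the \'etale-openness assertion I first unwind the condition on $\lins$-points: a point $q \in W(L)$ lies in $f(V(\lins))$ precisely when the scheme-theoretic fibre $V_q$, a finite $L$-scheme, has a closed point whose residue field is purely inseparable over $L$. So $q \notin f(V(\lins))$ means that every closed point of $V_q$ has residue field admitting a nontrivial separable subextension over $L$; this is the condition to be propagated to an \'etale neighbourhood of $q$.

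The crucial input, and the unpublished work of JTWY flagged in the introduction, is a structure result for finite morphisms in the \'etale open topology: for any finite morphism $f : V \to W$ of $L$-varieties, the image $f(V(\lins))$ is \'etale closed in $W(L)$. The idea behind this is that \'etale-locally on $W$ near $q$, the finite morphism $f$ decomposes according to the connected components of its fibre over $q$, and the condition ``some component has purely inseparable residue field'' is constant in an \'etale neighbourhood by a Hensel-type argument. Combining these two observations shows that $f(V(\lins))$ is \'etale closed.

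Granting this, the proof is immediate: the hypothesis gives a smooth $p \in W(L) \setminus f(V(\lins))$, so $W(L) \setminus f(V(\lins))$ is a nonempty \'etale open subset of the irreducible variety $W$ containing a smooth $L$-point, and Proposition~\ref{prop:very-large} delivers the desired cardinality. The main obstacle is clearly the \'etale-closedness assertion in the third paragraph: this is the ingredient specific to the \'etale open topology and is precisely the reason Section~\ref{section:finite} falls outside the general-system framework of Propositions A, B, C.
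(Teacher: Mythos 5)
Your proposal follows the paper's proof essentially verbatim: both take the JTWY closedness result for finite morphisms as a black box to conclude that $W(L) \setminus f(V(\lins))$ is a nonempty \'etale open set containing a smooth $L$-point, and then apply Proposition~\ref{prop:very-large}. The only packaging difference is that the paper states the unpublished theorem only for perfect fields (so for $f(V_{\lins}(\lins))$ inside $W_{\lins}(\lins)$) and then descends to an $\Sa E_L$-open subset of $W(L)$ via Fact~\ref{fact:down} applied to the algebraic extension $\lins/L$, whereas you assert the closedness of $f(V(\lins))$ in $W(L)$ directly; your heuristic sketch of why the image is closed is not a proof, but neither you nor the paper is expected to supply one.
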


We describe a topological proof of Fact~\ref{fact:finite-image}.
We cheat and use the following unpublished theorem of JTWY.

\begin{theorem}
\label{thm:unpub}
Suppose that $L$ is perfect, $f : V \to W$ is a finite morphism of $L$-varieties, and equip $W(L)$ with the \'etale open topology.
Then $f(V(L))$ is closed.
\end{theorem}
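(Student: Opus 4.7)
The plan is to show that the complement $W(L) \setminus f(V(L))$ is open in the \'etale open topology on $W(L)$, by exhibiting, for each $w \in W(L) \setminus f(V(L))$, an \'etale morphism $h \colon U \to W$ of $L$-varieties with $w \in h(U(L))$ and $h(U(L)) \cap f(V(L)) = \emptyset$; translating the second condition into fibered data, this is the requirement that $(V \times_W U)(L) = \emptyset$. First I would dispose of the easy case using that any system of topologies refines the Zariski topology: since $f$ is finite and hence proper, the Zariski image $f(V) \subseteq W$ is closed, and if $w \notin f(V)$ then $W \setminus f(V)$ is itself a Zariski---hence \'etale---open neighborhood of $w$ disjoint from $f(V)(L)$. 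So I may assume $w$ lies in $f(V)(L)$, meaning the scheme-theoretic fiber $V_w$ is non-empty but satisfies $V_w(L) = \emptyset$. Since $V_w$ is a finite $L$-scheme and $L$ is perfect, the reduction $V_w^{\mathrm{red}}$ is \'etale over $L$, decomposing as $\coprod_{i=1}^r \Spec(L_i)$ with each $L_i/L$ a proper finite separable extension.

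The heart of the argument is an \'etale ``twist of $f$ by the fiber $V_w$''. A model case is $f(x) = x^2$ with $w = a \in L \setminus L^2$, so $V_w = \Spec(L(\sqrt a))$: one takes $U = \Spec(L[t, s, s^{-1}]/(t - a s^2)) \cong \Gg_m$, which maps \'etalely to $W = \Aa^1$ via $(t, s) \mapsto t$, contains the $L$-point $(a, 1)$ above $w$, and has image $a \cdot L^{\times 2}$ --- which is disjoint from $L^2 = f(V(L))$ precisely because $a \notin L^2$. In general, after passing to an affine chart $W = \Spec(A)$, $V = \Spec(B)$, I would build $U$ as a scheme representing (a variant of) the functor $T \mapsto \underline{\mathrm{Isom}}_T(V \times_W T,\, V_w \times_L T)$ of isomorphisms of the $T$-scheme $V \times_W T$ with the constant family $V_w \times_L T$. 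Any $L$-point of such a $U$ lying over $w' \in W(L)$ supplies an $L$-isomorphism $V_{w'} \cong V_w$, from which $V_{w'}(L) = V_w(L) = \emptyset$. Concretely, one picks primitive elements $\alpha_i \in L_i$ with minimal polynomials $p_i(X) \in L[X]$, lifts to $P_i(X) \in A[X]$, and assembles $U$ from the resulting finite \'etale $A$-algebras together with compatibility data coming from $B$; separability of the $L_i/L$ yields \'etaleness of $U \to W$ near $w$ via the Jacobian criterion, while the tautological isomorphism $V_w \cong V_w$ furnishes the distinguished $L$-point above $w$.

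The main obstacle will be the case where $f$ is not \'etale at $V_w$, so $V_w$ carries nilpotent structure on top of its reduced \'etale part, and the automorphism group scheme $\underline{\mathrm{Aut}}(V_w)$ may cease to be finite (for instance $\underline{\mathrm{Aut}}(\Spec(L[x]/(x^2))) = \Gg_m$), rendering the naive $\underline{\mathrm{Isom}}$-torsor non-\'etale. The route I would pursue is to exploit that $L$-points factor through reductions --- so $V_{w'}(L) = V_{w'}^{\mathrm{red}}(L)$ for every $w' \in W(L)$, and only the reduced fibers matter for our question. Perfectness of $L$ is essential here: it ensures that the $L$-rationality obstruction lives entirely in the \'etale piece $\coprod \Spec(L_i)$ of the reduced fiber, whose structure varies rigidly in the \'etale topology by classical descent for finite \'etale covers. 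Making this precise --- constructing an \'etale $W$-scheme that trivializes only the \'etale (reduced) part of the fiber, while retaining $w$ in its $L$-image --- is where the real technical work lies.
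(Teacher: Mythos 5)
The paper does not actually prove this statement: it is imported verbatim as ``unpublished work of JTWY'' (the text says ``We cheat and use the following unpublished theorem''), so there is no in-paper argument to compare yours against, and your sketch has to stand on its own. The framing is right: reduce to showing the complement is étale-open; dispose of $w \notin f(V)$ using that finite morphisms have Zariski-closed image and that every system of topologies refines the Zariski topology; observe that for perfect $L$ the reduced fiber $V_w^{\mathrm{red}}$ is a disjoint union of $\Spec L_i$ with $L_i/L$ finite separable and, when $V_w(L)=\emptyset$, proper. The model case $x \mapsto x^2$ is correct (in characteristic $2$ it does not arise, since perfectness makes squaring surjective), and over the locus where $f$ is finite étale the $\underline{\mathrm{Isom}}$-scheme argument is sound: it produces a finite étale $W$-scheme with an $L$-point over $w$ whose $L$-image consists of $w'$ with $V_{w'} \cong V_w$, hence $V_{w'}(L)=\emptyset$.

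The genuine gap is the case you explicitly defer, and unfortunately that is where all the content of the theorem lives; moreover the mechanism you propose for it cannot work. Descent/rigidity for finite étale covers constrains fibers only over the étale locus; across the discriminant locus the isomorphism type --- even the degree --- of $V_{w'}^{\mathrm{red}}$ jumps. Take $f(x)=(x^2-2)^2$ on $\Aa^1_{\Qq}$ and $w=0$: then $V_0^{\mathrm{red}}=\Spec \Qq(\sqrt{2})$ has degree $2$, while $V_{w'}$ is reduced of degree $4$ for all but finitely many $w'$, so no scheme étale over a neighborhood of $0$ can ``trivialize the reduced fiber'' and still contain $0$ in the interior of its image: any $\underline{\mathrm{Isom}}$-type construction comparing $V_{w'}^{\mathrm{red}}$ with $V_0^{\mathrm{red}}$ is empty over the generic point. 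Note also that there is no $W$-scheme whose fiber at $w'$ is $V_{w'}^{\mathrm{red}}$ (reduction does not commute with base change: $V=\Aa^1 \xrightarrow{x^2} \Aa^1$ is reduced but has a non-reduced fiber), so ``the étale part of the reduced fiber varies rigidly'' is not a statement about any actual family. To handle branch points one must show that the condition ``the monic fiber polynomial $P_{w'}$ has no root in $L$'' is étale-open at $w$ even though the factorization type of $P_{w'}$ differs from that of $P_w$ --- e.g.\ by an induction on $\deg P$ that controls how roots of nearby fibers can coalesce onto the irreducible factors of $P_w$. That argument is the theorem, and it is missing from the sketch.
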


The assumption of perfection in Theorem~\ref{thm:unpub} is necessary.
Suppose that $L$ is separably closed and not algebraically closed.
It is shown in \cite{firstpaper} that the \'etale open topology over $L$ agrees with the Zariski topology.
The Frobenius is finite and the image of the Frobenius $L \to L$ is infinite and co-infinite, hence dense and co-dense in the Zariski topology.

\meno
We now prove Fact~\ref{fact:finite-image}.
As above we let $\Sa E_F$ be the \'etale open topology over a field $F$.\vspace{-.5ex}

\begin{proof}
As $\lins$ is perfect Theorem~\ref{thm:unpub} shows that $f(V_{\lins}(\lins))$ is an $\Sa E_{\lins}$-closed subset of $W_{\lins}(\lins)$.
By Fact~\ref{fact:down}
$$ [W_{\lins}(\lins) \setminus f(V_{\lins}(\lins) ] \cap W(L) =  W(L) \setminus f(V(\lins))$$
is an $\Sa E_L$-open subset of $W(L)$.
By assumption this $\Sa E_L$-open subset contains a smooth $L$-point of $W$.
An application of Proposition~\ref{prop:very-large} shows that $|W(L) \setminus f(V(\lins))| = |L|$.
\end{proof}

Fact~\ref{fact:koe} is due to  Koenigsmann, see the remarks after \cite[Conjecture 6.1]{open-problems-ample}.

\begin{fact}
\label{fact:koe}
Suppose $L$ is large, $f \in L[t]$ is irreducible, and $f(L) \ne L$.
Then $|L \setminus f(L)| = |L|$.
\end{fact}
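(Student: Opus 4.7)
The plan is to combine Theorem~\ref{thm:unpub} over the perfect closure $\lins$ with Proposition B. Since $f$ is irreducible and $f(L) \ne L$, a degree-one polynomial would induce a bijection of $L$, so $\deg f \ge 2$; in particular $f$ has no root in $L$, and $f : \Aa^1_L \to \Aa^1_L$ is a finite morphism. I will split on whether $f$ is purely inseparable.

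Suppose first that $f$ is not purely inseparable. I would begin by verifying that $f$ has no root in $\lins$: any such root $\alpha$ would make the irreducible $f$ the minimal polynomial of $\alpha$ over $L$, but $L(\alpha) \subseteq \lins$ is purely inseparable over $L$, forcing $f$ itself to be purely inseparable---contradiction. Hence $0 \in L \setminus f(\lins)$. Since $\lins$ is perfect and the base change $f_{\lins} : \Aa^1_{\lins} \to \Aa^1_{\lins}$ is finite, Theorem~\ref{thm:unpub} gives that $f(\lins)$ is $\Sa E_{\lins}$-closed in $\lins$. By Fact~\ref{fact:down} applied to the algebraic extension $\lins/L$, the set $L \setminus f(\lins) = L \cap (\lins \setminus f(\lins))$ is $\Sa E_L$-open in $L$, and it is nonempty because it contains $0$. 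Proposition~\ref{prop:very-large-gen} then yields $|L \setminus f(\lins)| = |L|$, and since $f(L) \subseteq f(\lins)$ we conclude $|L \setminus f(L)| = |L|$.

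Now suppose $f$ is purely inseparable. Then $\Chara L = p > 0$ and $f(t) = a t^{p^e} + c$ for some $a \in L^\times$, $c \in L$, and $e \ge 1$, and irreducibility forces $L \ne L^p$, so that $K := L^{p^e}$ is a proper subfield of $L$. The image $f(L) = a K + c$, and the affine bijection $y \mapsto a^{-1}(y - c)$ of $L$ carries $L \setminus f(L)$ bijectively onto $L \setminus K$. Proposition B (with $m = 1$ and $O = L$) gives $|L \setminus K| = |L|$, hence $|L \setminus f(L)| = |L|$.

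The main obstacle is the purely inseparable case: the approach via $\lins$ collapses there because the iterated Frobenius is bijective on the perfect field $\lins$, making $f(\lins) = \lins$ and leaving no nonempty $\Sa E_{\lins}$-open subset of $\lins$ disjoint from $f(\lins)$. Fortunately, Proposition B applied to the proper subfield $L^{p^e}$ supplies the needed substitute, and the two cases together cover every irreducible $f \in L[t]$.
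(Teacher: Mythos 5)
Your proof is correct. It reaches the same destination as the paper but by a genuinely different case division, and the difference matters. The paper splits into three cases: $L$ perfect (quoting Fact~\ref{fact:kosters}), $L$ imperfect with $f$ separable (quoting Fact~\ref{fact:finite-image} after observing $f$ has no root in $\lins$), and $L$ imperfect with $f$ inseparable, where it writes $f(t)=g(t^p)$ and asserts $f(L)\subseteq L^p$. You instead split on whether $f$ is purely inseparable, which is precisely the condition for the irreducible $f$ to acquire a root in $\lins$ (every element of $\lins$ has minimal polynomial $t^{p^e}-c$, as you note). Your first case is an inlined special case of Fact~\ref{fact:finite-image} --- the same chain Theorem~\ref{thm:unpub} $\to$ Fact~\ref{fact:down} $\to$ Proposition~\ref{prop:very-large-gen} that the paper uses to prove that fact --- and it absorbs the perfect case, so you never need Kosters. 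Your second case handles $f=at^{p^e}+c$ by an affine change of variable plus Proposition B applied to the proper subfield $L^{p^e}$. Your dichotomy is the more robust one: for an inseparable but not purely inseparable irreducible polynomial, e.g.\ $f(t)=t^{2p}-v$ over $\Ff_p(v)$ with $p$ odd, one has $f(b)=b^{2p}-v\notin L^p$ for every $b$, so the containment $f(L)\subseteq L^p$ fails outright (it already fails for $t^p+u$ over $\Ff_p(u)$), and I do not see how to finish that case from $f(t)=g(t^p)$ alone; such polynomials have no root in $\lins$ and belong with the Theorem~\ref{thm:unpub} argument, which is exactly where your case split sends them. In short, your version is not just correct but patches a soft spot in the paper's inseparable case.
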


\begin{proof}
The case when $L$ is perfect follows by Fact~\ref{fact:kosters}, so we suppose that $L$ is not perfect.
Let $p = \Chara(L)$ and $F = \{ a^p : a \in F\}$.
Note that $|L \setminus F| = |L|$ as $F$ is a proper subfield of $L$.
Suppose that $f$ is not separable.
Then $f(t) = g(t^p)$ for some $g \in L[t]$.
So $f(L) \subseteq F$ hence $|L \setminus f(L)| = |L|$.
Suppose that $f$ is separable.
As $f$ is irreducible there is no $a \in \lins$ such that $f(a) = 0$.
Apply Fact~\ref{fact:finite-image}.
\end{proof}

\section{An extension to diophantine sets}
\label{section:diophantine}
A subset $X$ of $L^m$ is \textbf{diophantine} if there is $f \in L[x_1,\ldots,x_m,y_1,\ldots,y_n]$ such that 
$$ X = \{ (a_1,\ldots,a_m) \in L^m : f(a_1,\ldots,a_m,y_1,\ldots,y_n) = 0 \text{ has a solution in } L \}. $$
The case of Fact~\ref{fact:diophantine} when $m = 1$ is due to Fehm~\cite{Fehm-subfield}, we will see below that the general case follows immediately from this case.\vspace{-.3ex}

\begin{fact}
\label{fact:diophantine}
Suppose that $L$ is perfect and large and $X$ is an infinite diophantine subset of $L^m$.
Then $|X| = |L|$, and if $K$ is a proper subfield of $L$ then $|X \setminus K^m| = |L|$.
\end{fact}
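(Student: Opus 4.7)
The plan is to reduce the statement to the $m=1$ case, which is Fehm's theorem from \cite{Fehm-subfield} and may be assumed, and then to read off the cardinality bounds for general $m$ by an elementary projection argument.

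First I would observe that since $X$ is infinite, at least one coordinate projection $\pi_i : L^m \to L$ must have infinite image on $X$ --- otherwise the inclusion $X \subseteq \pi_1(X) \times \cdots \times \pi_m(X)$ would force $X$ itself to be finite. Fix such an $i$. Next I would check that $\pi_i(X)$ is again diophantine: if $X$ is defined by the solvability of $f(a_1,\ldots,a_m,y_1,\ldots,y_n) = 0$ in the variables $y_1,\ldots,y_n$, then $\pi_i(X)$ is defined by the solvability of the same equation in the $m+n-1$ variables consisting of the remaining coordinates $a_j$ ($j \ne i$) together with the $y_k$, which is still a diophantine condition on $L$.

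Applying the $m=1$ case to $\pi_i(X) \subseteq L$ then delivers $|\pi_i(X)| = |L|$ and, for any proper subfield $K$ of $L$, $|\pi_i(X) \setminus K| = |L|$. The first of these yields $|X| \ge |\pi_i(X)| = |L|$, and since $L$ is infinite we also have $|X| \le |L^m| = |L|$, so $|X| = |L|$. For the subfield part, for each $y \in \pi_i(X) \setminus K$ I would select a preimage $x_y \in X$ with $i$-th coordinate equal to $y$; these $x_y$ are pairwise distinct and each lies in $X \setminus K^m$ (its $i$-th coordinate is not in $K$), giving an injection $\pi_i(X) \setminus K \hookrightarrow X \setminus K^m$, whence $|X \setminus K^m| \ge |L|$, hence equality.

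No step here presents a genuine obstacle; the entire content of the result is concentrated in the cited $m=1$ case, and the reduction above is purely set-theoretic --- using neither the \'etale open topology nor perfectness of $L$, both of which enter only through the one-variable input.
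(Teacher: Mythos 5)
Your reduction to the one-variable case is exactly the paper's: pick a coordinate projection $\pi$ with $\pi(X)$ infinite, note that $\pi(X)$ is diophantine (existentially quantify the remaining coordinates), and observe $\pi(X)\setminus K \subseteq \pi(X\setminus K^m)$, so both cardinality claims pull back from $\pi(X)$. That part is fine and matches the paper essentially verbatim. Where you diverge is in the treatment of the $m=1$ case: you declare it citable as Fehm's theorem, whereas the paper proves it. Its argument is: by Theorem~\ref{thm:to-come} (the JTWY decomposition of diophantine sets, which is where perfectness and largeness are used), a diophantine subset of $L$ is $A\cup O$ with $A$ finite and $O$ \'etale open and (since $X$ is infinite) nonempty; then Proposition B gives $|O\setminus K| = |L|$, and Proposition~\ref{prop:very-large-gen} handles $|O| = |L|$. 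As a verification that the statement is true, your proof is acceptable --- Fehm's result is published and your projection argument is airtight. But in the context of this paper the entire content of Fact~\ref{fact:diophantine} lives in the one-variable case, and the point of Section~\ref{section:diophantine} is precisely to rederive Fehm's theorem topologically; by outsourcing that step you have proved only the (admittedly immediate) multivariable reduction and none of the substance. If you want a self-contained argument in the spirit of the rest of the paper, replace the citation of Fehm by the two-line deduction from Theorem~\ref{thm:to-come} and Proposition B.
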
\vspace{-.1ex}

Fact~\ref{fact:diophantine} fails if $L$ is not perfect as the image of the Frobenius is a diophantine subfield.
Denef has shown that $\Zz$ is a Diophantine subset of $\Rr(t)$~\cite{denef-diophantine}.
Theorem~\ref{thm:to-come} is proven in \cite{exis-2-paper}.\vspace{-.2ex}

\begin{theorem}
\label{thm:to-come}
Suppose that $L$ is large and perfect and $X \subseteq L^m$ is diophantine.
Then there are closed subvarieties $V_1,\ldots,V_k$ of $\Aa^m_L$ and $X_1,\ldots,X_k$ such that each $X_i$ is an \'etale open subset of $V_i(L)$ and $X = \bigcup_{i = 1}^{k} X_i$.
In particular any diophantine subset of $L$ is a union of an \'etale open subset of $L$ and a finite set.
\end{theorem}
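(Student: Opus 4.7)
The plan is Noetherian induction on the dimension of a witness variety for $X$. Write $X = \pi(Z(L))$ where $\pi : \Aa^{m+n}_L \to \Aa^m_L$ is the projection onto the first $m$ coordinates and $Z \subseteq \Aa^{m+n}_L$ is the closed subvariety cut out by the polynomial defining $X$. Since diophantine sets are closed under finite union, by decomposing $Z$ into its irreducible components I may assume $Z$ is irreducible and reduced; let $W \subseteq \Aa^m_L$ be the reduced Zariski closure of $\pi(Z)$, so $\pi|_Z : Z \to W$ is dominant. I induct on $\dim Z$, the base case $\dim Z = 0$ being a finite set of points, each étale open in itself as a zero-dimensional closed subvariety.

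The key step is to produce a nonempty open $U \subseteq W$ and a dense open $Z_0 \subseteq Z$ with $\pi(Z_0) \subseteq U$ such that $\pi|_{Z_0} : Z_0 \to U$ is smooth. Granting this, locally on $Z_0$ a smooth morphism of relative dimension $d := \dim Z - \dim W$ factors through an étale morphism $\phi : Z_0 \to U \times_L \Aa^d_L$ over $U$; after covering $Z_0$ by finitely many such opens, $\phi(Z_0(L))$ is an étale open subset of $U(L) \times L^d$, and by Fact~\ref{fact:proj} its image $\pi(Z_0(L))$ under the projection to $U(L)$ is an étale open subset of $U(L)$, hence of $W(L)$ since $U \hookrightarrow W$ is an open immersion. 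Setting $X_1 := \pi(Z_0(L))$ and $V_1 := W$ captures one piece. The remainder $X \setminus X_1$ lies in $\pi((Z \setminus Z_0)(L)) \subseteq X$, which is again diophantine (the condition $(a,b) \in Z \setminus Z_0$ is a finite disjunction of polynomial equations adjoined to those cutting out $Z$) and is witnessed by a closed subvariety of strictly smaller dimension than $Z$; the inductive hypothesis applied to each irreducible component of $Z \setminus Z_0$ supplies the remaining pieces $X_2, \ldots, X_k$.

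The hard part is the generic smoothness claim in positive characteristic, which requires separability of the function field extension $L(Z)/L(W)$ and is not automatic for dominant morphisms. This is where perfectness of $L$ is used crucially: when the extension is inseparable, factor $\pi|_Z$ as $Z \to Z^{\mathrm{sep}} \to W$ through the separable closure of $L(W)$ in $L(Z)$, so that $Z \to Z^{\mathrm{sep}}$ is purely inseparable. Since $L$ admits no nontrivial purely inseparable field extensions, the induced map $Z(L) \to Z^{\mathrm{sep}}(L)$ is a bijection, and hence $\pi(Z(L)) = \pi^{\mathrm{sep}}(Z^{\mathrm{sep}}(L))$; replacing $Z$ by (a suitable model of) $Z^{\mathrm{sep}}$ reduces to the separable case, where generic smoothness is available. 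Realizing $Z^{\mathrm{sep}}$ as a closed subvariety of some affine space and verifying that this reduction preserves the diophantine-set structure of the image is the technical heart of the argument.
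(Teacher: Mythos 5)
First, a caveat: the paper does not prove Theorem~\ref{thm:to-come} at all --- it is quoted from \cite{exis-2-paper} --- so there is no in-paper argument to compare yours against. Judged on its own terms, your architecture (Noetherian induction on $\dim Z$, generic smoothness of $\pi|_{Z_0} : Z_0 \to U$, the local factorization of a smooth morphism as an \'etale map to $\Aa^d_U$ followed by the projection, Fact~\ref{fact:proj} to conclude that $\pi(Z_0(L))$ is \'etale open in $U(L)$, and pushing the complement $Z \setminus Z_0$ into the induction) is sound, and the bookkeeping $X = \pi(Z_0(L)) \cup \pi((Z\setminus Z_0)(L))$ is correct.

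The genuine gap is exactly where you say the ``technical heart'' lies, and your proposed recipe for it does not work as stated. The separable (algebraic) closure of $L(W)$ in $L(Z)$ only yields a factorization $Z \to Z^{\mathrm{sep}} \to W$ with $Z \to Z^{\mathrm{sep}}$ purely inseparable when $\pi|_Z$ is generically finite; if $\operatorname{trdeg}(L(Z)/L(W)) > 0$ then $L(Z)$ is transcendental, not purely inseparable, over that subfield, and the failure of separable generation is not detected by any algebraic subextension on top. Concretely, for $Z = \Aa^2_L \to W = \Aa^1_L$, $(x,y) \mapsto x^p + y^p$, the extension $L(x,y)/L(x^p+y^p)$ has transcendence degree $1$ but $\dim \Omega = 2$, so it is not separably generated, while the separable closure of $L(x^p+y^p)$ in $L(x,y)$ gives nothing useful. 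The correct d\'evissage is to pass to the subfield $E_n = L(W)\cdot L(Z)^{p^n}$ for $n \gg 0$: one shows (a Frobenius-twisted form of MacLane's theorem) that $E_n/L(W)$ is separably generated for large $n$ while $L(Z)/E_n$ is finite purely inseparable, and then a model of $E_n$ mapping to $W$, realized (via a graph embedding) as a closed subvariety of some $\Aa^{m+n'}_L$ projecting onto the first $m$ coordinates, replaces $Z$; perfectness of $L$ enters to show that the finite radicial map $Z \to Z^{\mathrm{sep}}$ is, over a dense open, surjective on $L$-points (residue fields of fibers over $L$-points are purely inseparable over $L$, hence equal $L$), with the bad locus again absorbed by the induction. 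In your example this factorization is $L(x^p+y^p) \subseteq L(x^p, y^p) \subseteq L(x,y)$, which is not what your recipe produces. So the skeleton is right, but the step that actually uses perfectness is both misstated and left unverified, and as submitted the proof is incomplete.
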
\vspace{-.1ex}

Theorem~\ref{thm:to-come} also requires perfection.
Suppose that $L$ is separably closed and not algebraically closed, and let $F$ be the image of the Frobenius $L \to L$.
It is shown in \cite{firstpaper} that the \'etale open topology over $L$ agrees with the Zariski topology, so $F$ is a dense and co-dense subset of $L$.
We now prove Fact~\ref{fact:diophantine}.

\begin{proof}
We only prove the second claim as the proof of the first claim is similar.
Fix a coordinate projection $\pi : L^m \to L$ such that $\pi(X)$ is infinite.
Note that $\pi(X)$ is diophantine and $\pi(X) \setminus L \subseteq \pi(X \setminus L^m)$, so we may suppose that $m = 1$.
By Theorem~\ref{thm:to-come} we have $X = A \cup O$ where $A$ is finite and $O$ is a nonempty \'etale open subset of $L$.
Proposition B yields $|O \setminus K| = |L|$.
\end{proof}

\bibliographystyle{amsalpha}
\bibliography{ref}

\end{document}